\theoremstyle{plain}
\newtheorem{theorem}{Theorem}[section]
\newtheorem{lemma}[theorem]{Lemma}
\newtheorem{corollary}[theorem]{Corollary}
\theoremstyle{definition}
\newtheorem{example}[theorem]{Example}
\theoremstyle{remark}
\newtheorem{remark}[theorem]{Remark}
\newtheorem*{remark*}{Remark}
\numberwithin{equation}{section}
\newcommand\D{{\mathcal D}}
\newcommand\A{{\mathcal A}}
\newcommand\RR{{\mathbb R}}
\newcommand\ZZ{{\mathbb Z}}
\newcommand\NN{{\mathbb N}}
\newcommand\PP{{\mathbb P}}
\newcommand\R{{\mathfrak R}}
\newcommand\Ss{{\mathfrak S}}
\newcommand\no{{\rho}}
\newcommand\Rc{{\mathcal R}}
\newcommand\alp{\tilde{\mathfrak D}}
\newcommand\al{\mathfrak D}
\newcommand\Sh{\mbox{\Large $\mathfrak {s}$}}
\title[]{Bispectral Jacobi type polynomials}
\author{Antonio J. Dur\'an and Manuel D. de la Iglesia}
\address{Antonio J. Dur\'an\\
Departamento de An\'{a}lisis Matem\'{a}tico \\
Universidad de Sevilla \\
Apdo (P. O. BOX) 1160\\
41080 Sevilla. Spain.}
\email{duran@us.es}
\address{Manuel D. de la Iglesia\\
Instituto de Matem\'aticas \\
Universidad Nacional Aut\'onoma de M\'exico \\
Circuito Exterior, C.U.\\
04510, Mexico D.F. Mexico.}
\email{mdi29@im.unam.mx}
\thanks{Partially supported by MTM2015-65888-C4-1-P (Ministerio de Econom\'ia y Competitividad), FQM-262 (Junta de Andaluc\'ia), Feder Funds (European Union), PAPIIT-DGAPA-UNAM grant IN104219 (M\'exico) and CONACYT grant A1-S-16202 (M\'exico).}
\date{\today}
\subjclass[2010]{42C05, 33C45, 33E30}
\keywords{Orthogonal polynomials. Bispectral orthogonal polynomials. Recurrence relations. Krall polynomials. Jacobi polynomials.}
\begin{document}
  \maketitle

   \begin{abstract}
We study the bispectrality of Jacobi type polynomials, which are eigenfunctions of higher-order differential operators and can be defined by taking suitable linear combinations of a fixed number of consecutive Jacobi polynomials. Jacobi type polynomials include, as particular cases, the  Krall-Jacobi polynomials. As the main results we prove that the  Jacobi type polynomials always satisfy higher-order recurrence relations (i.e., they are bispectral). We also prove that the Krall-Jacobi families are the only Jacobi type polynomials which are orthogonal with respect to a measure on the real line.
\end{abstract}

\section{Introduction and results}
Bispectrality in its continuous-continuous version is a subject that was started by H. Duistermaat and F.A. Gr\"unbaum in the 1980s \cite{DuiGr}. In the context of orthogonal polynomials, we say that a sequence of polynomials $(q_n(x))_n$ is bispectral if there exist a difference operator, acting on the discrete variable $n$, of the form
\begin{equation}\label{doho2g}
D_n=\sum_{i=s}^r\gamma_{n,i}\Sh _i, \quad s\le r, \quad s,r\in \ZZ,
\end{equation}
where $\Sh_l$ stands for the shift operator $\Sh_l(f(n))=f(n+l)$ and $\gamma_{n,i}$, $i=s,\ldots, r$, are sequences of numbers with $\gamma_{n,s},\gamma_{n,r}\not =0 $, $n\ge 0$, and a differential operator acting on the continuous variable $x$, with respect to which the polynomials $(q_n(x))_n$ are eigenfunctions (other type of operators acting on the continuous variable $x$ can be considered, but in this paper we restrict ourselves to differential operators).

It is easy to see that if $D_n(q_n)=Q(x)q_n$ then $Q$ is a polynomial of degree $r$, and hence each operator $D_n$ of the form (\ref{doho2g}) produces a higher-order recurrence relation for the polynomials $(q_n)_n$, i.e.
\begin{equation}\label{qrr}
Q(x)q_n(x)=\sum_{i=s}^r\gamma_{n,i}q_{n+i}(x), \quad s\le r.
\end{equation}
For $r=-s=1$, the recurrence relation (\ref{qrr}) reduces to the usual three-term recurrence relation for orthogonal polynomials with respect to a measure supported on the real line
\begin{equation}\label{ttrr}
xq_n(x)=a_nq_{n+1}(x)+b_nq_n(x)+c_nq_{n-1}(x), \quad n\ge 0,\quad q_{-1}=0.
\end{equation}
Hence the classical families of orthogonal polynomials, Hermite, Laguerre and Jacobi (and Bessel, if non-positive measures are considered), are examples of bispectral polynomials.

Krall polynomials are other well-known examples of bispectral polynomials. Krall polynomials are  eigenfunctions of higher-order differential operators. They are called Krall polynomials because they were introduced by H.L. Krall in 1940 \cite{Kr2}: Krall proved that the differential operators must have even order and classified the case of order four. Since the 1980's, Krall polynomials associated with differential operators of any even order have been constructed and intensively studied (\cite{koekoe,koe,koekoe2,L1,L2,GrH1,GrHH,GrY,Plamen1,Plamen2,Zh}; the list is not exhaustive). There are two known classes of Krall polynomials: the Krall-Laguerre and the Krall-Jacobi families. Krall-Laguerre polynomials are orthogonal with respect to measures of the form
\begin{equation*}\label{Kcwm}
x^{\alpha-m}e^{-x}+\sum_{h=0}^{m-1}b_h\delta_0^{(h)},\quad x\ge 0,
\end{equation*}
where $\alpha $ and $m$ are positive integers with $\alpha\geq m$ and $b_h$, $h=0,\ldots, m-1$, are certain real numbers with $b_{m-1}\not=0$. Krall-Jacobi polynomials are orthogonal with respect to any of the following measures
\begin{align}\label{kjm1}
&(1-x)^{\alpha-m_2}(1+x)^\beta+\sum_{h=0}^{m_2-1}c_h\delta_1^{(h)}, \quad \alpha\in \NN, \alpha \ge m_2,\\\label{kjm2}
&(1-x)^{\alpha}(1+x)^{\beta-m_1}+\sum_{h=0}^{m_1-1}c_h\delta_{-1}^{(h)}, \quad \beta\in \NN, \beta \ge m_1,\\\label{kjm3}
&(1-x)^{\alpha-m_2}(1+x)^{\beta-m_1}+\sum_{h=0}^{m_2-1}c_h\delta_1^{(h)}+\sum_{h=0}^{m_1-1}d_h\delta_{-1}^{(h)}, \quad \alpha,\beta\in \NN, \alpha \ge m_2, \beta \ge m_1.
\end{align}
Both, the Krall-Laguerre and Krall-Jacobi polynomials, are also eigenfunctions of a higher-order differential operator.

Other examples of bispectral polynomials are the Krall-Sobolev polynomials (see \cite{KKB,Ba,ddI1,ddI3}), the exceptional polynomials (see \cite{GUKM1,duch,dume,durr,duhj,GFGM}, and references therein) or the Gr\"unbaum and Haine extension of Krall polynomials (\cite{GrH3}; see also \cite{Plamen1,du1}). In these cases, the associated operators (in the discrete and continuous variable) have order greater than $2$.

In \cite{ddIlb}, we have studied Laguerre type polynomials. They are defined by taking suitable linear combinations of a fixed number of consecutive Laguerre  polynomials. These Laguerre  type polynomials are eigenfunctions of higher-order differential operators and include, as particular cases, the Krall-Laguerre polynomials. Among other things, we have proved in \cite{ddIlb} that Laguerre type polynomials are also bispectral and that the Krall-Laguerre families are the only Laguerre type polynomials which are orthogonal with respect to a measure on the real line.

The purpose of this paper is to study Jacobi type polynomials.
For $\alpha,\beta,\alpha+\beta \not =-1,-2,\ldots$ we use the following renormalization of the Jacobi polynomials:
$$
J_n^{\alpha,\beta}(x)=\frac{(-1)^n(\alpha+\beta+1)_n}{2^n(\beta+1)_n}
\sum_{j=0}^n\binom{n+\alpha}{j}\binom{n+\beta}{n-j}(x-1)^{n-j}(x+1)^j.
$$
We denote by $\mu_{\alpha,\beta}(x)$ the orthogonalizing weight for the Jacobi polynomials normalized so that
$
\int \mu_{\alpha,\beta}(x)dx=2^{\alpha+\beta+1}\frac{\Gamma(\alpha+1)\Gamma(\beta+1)}{\Gamma(\alpha+\beta +2)}.
$
Only when $\alpha,\beta >-1$, $\mu_{\alpha,\beta }(x)$, $-1<x<1$, is positive, and then
\begin{equation}\label{pJac}
\mu_{\alpha,\beta}(x) =(1-x)^\alpha (1+x)^\beta,\quad -1<x<1.
\end{equation}

From the Jacobi polynomials $(J_n^{\alpha,\beta})_n$, we can generate sequences of polynomials $(q_n(x))_n$ which are eigenfunctions of a higher-order differential operator (acting on the continuous variable $x$) in the following way.
 Consider two finite sets $G=\{g_1,\ldots,g_{m_1}\}$ and $H=\{h_1,\ldots,h_{m_2}\}$ of positive integers (written in increasing size) and polynomials $\mathcal{R}_g,g\in G$, with $\deg\mathcal{R}_g=g$ and $\mathcal{S}_h,h\in H$, with $\deg\mathcal{S}_h=h$. The positive integers $m_1$ and $m_2$ are the number of elements of $G$ and $H$, respectively, and let us call $m=m_1+m_2$. Since the leading coefficients of the polynomials $\Rc_g$ and $\mathcal{S}_h$ just produce a renormalization of the polynomials $(q_n)_n$ (see (\ref{iquss}) below), we assume along the rest of this paper that $\Rc_g(x)$ and $\mathcal{S}_h(x)$ are monic polynomials.

We also denote by $\mathcal{Z}_i, i=1,\ldots,m$, the set of polynomials defined by
\begin{equation}\label{defpol}
\mathcal{Z}_i(x)=\begin{cases}\mathcal{R}_{g_i}(x),&\mbox{for $i=1,\ldots, m_1$,}\\
\mathcal{S}_{h_{i-m_1}}(x),&\mbox{for $i=m_1+1,\ldots ,m$.}
\end{cases}
\end{equation}
For $\alpha-m_2\neq -1, -2, \ldots,$ and $\beta-m_1\neq -1, -2, \ldots,$ we write
\begin{equation}\label{defno}
\no ^i_{x,j}=\begin{cases} (-1)^{m-j}\Gamma^{\alpha-j,\beta-1}_{\alpha-m,\beta-j}(x),&\mbox{for $i=1,\ldots, m_1$,}\\
1,&\mbox{for $i=m_1+1,\ldots ,m$,}
\end{cases}
\end{equation}
where for $a,b,c,d,x\in \RR$, we define
\begin{equation}\label{gammas}
\Gamma^{a,b}_{c,d}(x)=\frac{\Gamma(x+a+1)\Gamma(x+b+1)}{\Gamma(x+c+1)\Gamma(x+d+1)}.
\end{equation}
We will always use $\no ^i_{x,j}$, with $j=0,\ldots ,m$. Hence when $\alpha$ or $\beta$ are nonnegative integers $\no ^i_{n,j}$, $n\in \NN$, can be also defined
from (\ref{gammas}) by using the standard properties of the Gamma function and taking $1/\Gamma(-n)=0$, $n\in \NN$.

We associate to $G$ and $H$ the following $m\times m$ quasi-Casoratian determinant
\begin{equation}\label{casdet}
\Lambda_{G,H}(n)=\frac{\begin{vmatrix}
\no^1_{n,1} \mathcal{Z}_1(\theta_{n-1}) &\no^1_{n,2} \mathcal{Z}_1(\theta_{n-2}) & \ldots &  \no^1_{n,m}\mathcal{Z}_1(\theta_{n-m}) \\
               \vdots & \vdots & \ddots & \vdots \\
\no^m_{n,1} \mathcal{Z}_m(\theta_{n-1}) & \no^m_{n,2} \mathcal{Z}_m(\theta_{n-2}) & \ldots & \no^m_{n,m}\mathcal{Z}_m(\theta_{n-m})
             \end{vmatrix}}{\mathfrak p(n)\mathfrak q(n)},
\end{equation}
where $\mathfrak p$ and $\mathfrak q$ are the following polynomials
\begin{align}\label{def1pi}
\mathfrak p(x)&=\prod_{i=1}^{m_1-1}(-1)^{m_1-i}(x+\alpha-m+1)_{m_1-i}(x+\beta-m_1+i)_{m_1-i},\\
\label{def1qi}\mathfrak q(x)&=(-1)^{\binom{m}{2}}\prod_{h=1}^{m-1}\left(\prod_{i=1}^{h}(2(x-m)+\alpha+\beta+i+h)\right),
\end{align}
where $(a)_0=1, (a)_n=a(a+1)\cdots(a+n-1)$ denotes as usual the Pochhammer symbol and $\theta_n=n(n+\alpha+\beta+1)$ is the eigenvalue associated with the second-order differential operator for the Jacobi polynomials.
Along this paper we will assume that
\begin{equation}\label{assum}
\Lambda_{G,H}(n)\neq0,\quad n=0,1,2,\ldots
\end{equation}
We then define the sequence of polynomials $(q_n)_n$ by
\begin{equation}\label{iquss}
q_n(x)=\frac{\begin{vmatrix}
               J_{n}^{\alpha,\beta}(x) & -J_{n-1}^{\alpha,\beta}(x) & \ldots & (-1)^mJ_{n-m}^{\alpha,\beta}(x) \\
\no^1_{n,0}\mathcal{Z}_1(\theta_{n}) &\no^1_{n,1} \mathcal{Z}_1(\theta_{n-1}) & \ldots &  \no^1_{n,m}\mathcal{Z}_1(\theta_{n-m}) \\
               \vdots & \vdots & \ddots & \vdots \\
\no^m_{n,0}\mathcal{Z}_m(\theta_{n}) & \no^m_{n,1} \mathcal{Z}_m(\theta_{n-1}) & \ldots & \no^m_{n,m}\mathcal{Z}_m(\theta_{n-m})
             \end{vmatrix}}{\mathfrak p(n)\mathfrak q(n)}
\end{equation}
(see the Remark \ref{qsc} for a discussion of how to define $\Lambda _{G,H}$ and $q_n$ when $\alpha$ and $\beta$ are nonnegative integers).
The assumption (\ref{assum}) says that the determinant on the right-hand side of (\ref{iquss}) defines a polynomial of degree $n$, $n\ge 0$. Expanding the determinant by its first row, we see that each $q_n$, $n\ge m$, is a linear combination of $m$ consecutive Jacobi polynomials.

Using the $\D$-operator method, it is proved in \cite{ddI3} (see Theorem 3.1 and the beginning of Section 4 of that paper) that the polynomials $(q_n)_n$ are eigenfunctions of a higher-order differential operator (acting on the continuous variable $x$) of the form
$D_x=\sum_{l=0}^rh_l(x)\left(\frac{d}{dx}\right)^l$,
where $h_l(x)$ are polynomials and $r$ is a positive even integer greater than $2$. This differential operator can, in fact, be explicitly constructed. For a different approach of the polynomials (\ref{iquss}) using discrete Darboux transformations see \cite{GrY,Plamen1}.

The most interesting case corresponds with the different families of Krall-Jacobi polynomials, orthogonal with respect to any of the measures (\ref{kjm1}), (\ref{kjm2}) or (\ref{kjm3}). Indeed, let $u_j^{\lambda}(x)$ be the following polynomials
\begin{equation}\label{basu}
u_j^{\lambda}(x)=(x+\alpha-\lambda+1)_j(x+\beta+\lambda-j+1)_j=
\Gamma_{\alpha-\lambda,\beta+\lambda-j}^{\alpha-\lambda+j,\beta+\lambda}(x).
\end{equation}
We have that $u_j^{\lambda}(x)\in\mathbb{R}[\theta_x]$, where $\theta_x=x(x+\alpha+\beta +1)$ (see \cite[p. 216]{ddI3}). Take now $\alpha $ and $\beta$ two positive integers with $m_2\le \alpha\le \max H$, $m_1\le\beta\le\max G$,
$$
G=\{\beta,\beta+1,\ldots ,\beta+m_1-1\},\quad H=\{\alpha,\alpha+1,\ldots ,\alpha+m_2-1\},
$$
and
\begin{align}\label{pel1}
\R_{g_k}(\theta_x)&=u_{\beta+k-1}^\alpha(x)+\sum_{l=0}^{k-1}\frac{(\beta+k-l)_l\binom{k-1}{l}a_{k-l-1}}{(-1)^l(\beta-l)_l}
u_l^\alpha(x),\quad k=1,\ldots,m_1,\\\label{pel2}
\Ss_{h_k}(\theta_x)&=u_{\alpha+k-1}^\alpha(x)+\sum_{l=0}^{k-1}\frac{(\alpha+k-l)_l
\binom{k-1}{l}b_{k-l-1}}{(-1)^l(\alpha-l)_l}
u_l^\alpha(x),\quad k=1,\ldots,m_2,
\end{align}
where $a_k$, $k=0,\ldots, m_1-1$, $b_k$, $k=0,\ldots, m_2-1$, are real numbers with $a_0,b_0\not=0$. Then the polynomials (\ref{iquss}) are orthogonal with respect to the Krall-Jacobi weight (\ref{kjm3}) (for certain parameters $c_k$, $k=0,\ldots, m_2-1$, $d_k$, $k=0,\ldots, m_1-1$). In \cite[(1.13) and Example 4.1, 1, p. 217]{ddI3}, we represent $(q_n)_n$ with a different set of polynomials $(\mathcal{Z}_l)_{l=1}^m$ (\ref{defpol}) from where the representation $\{\R_g\}_{g\in G}$, $\{\Ss_h\}_{h\in H}$ can be easily obtained.

\medskip

As the main results of this paper, we first prove that for any set of polynomials $\mathcal{R}_g,g\in G$, with $\deg\mathcal{R}_g=g$, and $\mathcal{S}_h,h\in H$, with $\deg\mathcal{S}_h=h$, satisfying (\ref{assum}), the polynomials $(q_n)_n$ (\ref{iquss}) are bispectral. And second, we also prove that the only sequences $(q_n)_n$ (\ref{iquss}) satisfying a three-term recurrence relation (and therefore they are orthogonal with respect to a measure) are essentially the Krall-Jacobi polynomials orthogonal with respect to the any of the measures (\ref{kjm1}), (\ref{kjm2}) or (\ref{kjm3}).

\medskip

The content of the paper is as follows. After some preliminaries in Section 2, in Section 3 we find some orthogonality properties for the polynomials $(q_n)_n$ with respect to a certain bilinear form. When $\alpha-\max G\not =0,-1,-2,\ldots $ and $\beta-\max H\not =0,-1,-2,\ldots $, we get this bilinear form by modifying the Jacobi weight with a nonsymmetric perturbation (which strongly depends on the polynomials $\Rc_g$ and $\mathcal{S}_h$). When $1\le \alpha \le \max G$ and/or $1\le \beta \le \max H$ (which includes the Krall-Jacobi polynomials orthogonal with respect to (\ref{kjm1}), (\ref{kjm2}) or (\ref{kjm3})), in order to get orthogonality properties, we have to transform a portion of that perturbation into a discrete Sobolev part.

These orthogonality properties allow us to prove in Section 4 that the sequence $(q_n)_n$ satisfies some recurrence relations of the form (\ref{qrr}) where $s=-r$. On the other hand, the orthogonality properties constrain the number of terms of these recurrence relations: in particular, we prove in Section 4 that when $\alpha-\max G\not =0,-1,-2,\ldots $ and $\beta-\max H\not =0,-1,-2,\ldots $, the sequence $(q_n)_n$ can never satisfy a three-term recurrence relation of the form (\ref{ttrr}). When $1\le \alpha \le \max G$ and/or $1\le \beta \le \max H$, we prove that the sequence $(q_n)_n$ satisfies a three-term recurrence relation of the form (\ref{ttrr}) only when they correspond with the Krall-Jacobi cases (\ref{kjm1}), (\ref{kjm2}) or (\ref{kjm3}).

We also prove some results for the algebra of operators $\al_n$, defined as follows. We denote by $\A_n$ the algebra formed by all higher-order difference operators (acting on the variable $n$) of the form (\ref{doho2g}). Then we define
\begin{equation*}\label{algo}
\al _n=\{D_n\in \A_n: D_n(q_n)=Q(x)q_n,\; Q\in\RR[x]\},
\end{equation*}
where $\RR[x]$ denotes the linear space of real polynomials in the unknown $x$. This algebra is actually characterized by the algebra of polynomials defined from the corresponding eigenvalues
\begin{equation*}\label{algp}
\alp _n=\{Q\in\RR[x]: \mbox{there exists $D_n\in \al_n$ such that $D_n(q_n)=Q(x)q_n$}\}.
\end{equation*}
In Section 4 we prove that when $\alpha-\max G\not =0,-1,-2,\ldots $ and $\beta-\max H\not =0,-1,-2,\ldots $ and $G$ is a segment, i.e. its elements are consecutive positive integers, the algebra $\alp_n$ has a simple estructure:
$$
\alp_n=\{Q\in\RR[x]: \mbox{$(1+x)^{\max G}(1-x)^{\max H}$ divides $Q'$}\}.
$$
For a characterization of the corresponding algebra for the Charlier and Meixner type polynomials see \cite{ducb, DuR}. We also give some examples showing that, in general, this algebra can have a more complicated structure.

The structure of the Jacobi case is technically more involved than that of the Laguerre case studied in \cite{ddIlb}. On one hand, we have to use a more complicated basis $(b_s)_{s\ge 1}$ in the linear space of polynomials (see (\ref{basisb}) below). On the other hand, we have to work with a pair of finite sets of positive integers instead of only one set, and more parameters (in any case we will omit those proofs which are too similar to the corresponding ones in \cite{ddIlb} for the Laguerre type polynomials).

\section{Preliminaries}

Consider two finite sets $G=\{g_1,\ldots,g_{m_1}\}$ and $H=\{h_1,\ldots,h_{m_2}\}$ of positive integers (written in increasing size) and polynomials $\mathcal{R}_g,g\in G$, with $\deg\mathcal{R}_g=g$ and $\mathcal{S}_h,h\in H$, with $\deg\mathcal{S}_h=h$. We associate to $G$ and $H$ the sequence of polynomials $(q_n)_n$ defined by \eqref{iquss}. Along this paper we will always assume that $\Lambda_{G,H}(n)\neq 0, n\geq0$.

\begin{remark}\label{qsc}
When $\alpha$ and $\beta$ are integers, $\mathfrak p(n)\mathfrak q(n)$ can vanish for some $n=0,\ldots, m-1$, where $\mathfrak p$ and $\mathfrak q$ are the polynomials defined by (\ref{def1pi}) and (\ref{def1qi}), respectively. However, even if for some $n=0,\ldots, m-1$, $\mathfrak p(n)\mathfrak q(n)=0$, the ratio $\Lambda_{G,H}$ (\ref{casdet}) and the polynomial $q_n$ (\ref{iquss}) are well-defined (and hence $q_n$ has degree $n$ if and only if $\Lambda_{G,H}(n)\not =0, n\geq0$). This can be proved as for the Jacobi-Sobolev polynomials studied in \cite[p. 205]{ddI3}.
\end{remark}

We will use the following alternative definition of the polynomials $(q_n)_n$ in \eqref{iquss}. For $j=0,1,\ldots,m$, let us define the sequences $(\beta_{n,j})_n$ by
\begin{equation}\label{betaj}
\beta_{n,j}=\frac{1}{\mathfrak p(n)\mathfrak q(n)}\mbox{det}\left(\rho_{n,i}^l\mathcal{Z}_l(\theta_{n-i})\right)_{\begin{subarray}{1} l=1,\ldots,m,\\
i=0,\ldots, m,i\not =j\end{subarray}}.
\end{equation}
By expanding the determinant \eqref{iquss} by its first row (writing $J_{u}^{\alpha,\beta}(x)=0$ for $u<0$) we get the expansion
\begin{equation}\label{qqbeta}
q_n(x)=\sum_{j=0}^{m\wedge n}\beta_{n,j}J_{n-j}^{\alpha,\beta}(x).
\end{equation}
A straightforward computation, using \eqref{casdet}, \eqref{def1pi}, \eqref{def1qi}, \eqref{betaj} and \eqref{assum}, shows that
\begin{equation}\label{betajm}
\beta_{n,m}=(-1)^{m_1}\left(\frac{n+\alpha-m+1}{n+\beta}\right)^{m_1}\frac{\mathfrak p(n+1)\mathfrak q(n+1)}{\mathfrak p(n)\mathfrak q(n)}\Lambda_{G,H}(n+1),
\end{equation}
where the polynomials $\mathfrak p$ and $\mathfrak q$ are defined by (\ref{def1pi}) and (\ref{def1qi}), respectively.


On the other hand, substituting the first row in \eqref{iquss} by any other row in that determinant, we get the trivial identity
\begin{equation}\label{vanisz}
\sum_{j=0}^m(-1)^j\beta_{n,j}\rho_{n,j}^l\mathcal{Z}_l(\theta_{n-j})=0,\quad l=1,\ldots,m.
\end{equation}
\begin{remark}\label{r2.1}
We stress that if we substitute the polynomials $\Rc_g$ and $\mathcal{S}_h$  in the determinant (\ref{iquss}) by any linear combination $R_g$ and $S_h$ of the form
\begin{equation*}\label{ocl1}
R_g=\Rc_g+\sum_{\tilde g\in G;\tilde g<g}\zeta_{g,\tilde g}\Rc_{\tilde g},\quad
S_h=\mathcal{S}_h+\sum_{\tilde h\in H;\tilde h<h}\chi_{h,\tilde h}\mathcal{S}_{\tilde h},
\end{equation*}
then the polynomials $(q_n)_n$ remain invariant. Notice that $\deg R_g=\deg \Rc_g=g$, $\deg S_h=\deg \mathcal{S}_h=h$ and $R_g$ and $S_h$ are again monic polynomials.
\end{remark}
Given polynomials $Y_i$, $i=0,\ldots, m$, with $\deg Y_i=u_i$, we write
\begin{equation}\label{iqussy}
W_a^Y(x)=\frac{\begin{vmatrix}
              \no^1_{x,0}Y_0(\theta_{x}) &\no^1_{x,1} Y_0(\theta_{x-1}) & \cdots &  \no^1_{x,m}Y_0(\theta_{x-m}) \\
              \no^1_{x,0}Y_1(\theta_{x}) &\no^1_{x,1} Y_1(\theta_{x-1}) & \cdots &  \no^1_{x,m}Y_1(\theta_{x-m}) \\
               \vdots & \vdots & \ddots & \vdots \\
\no^m_{x,0}Y_m(\theta_{x}) & \no^m_{x,1} Y_m(\theta_{x-1}) & \cdots & \no^m_{x,m}Y_m(\theta_{x-m})
             \end{vmatrix}}{\mathfrak p(x)\mathfrak q(x)}.
\end{equation}
Using Lemma A.1 of \cite{ddI3}, it follows easily that when $u_i\neq u_j$, $0\le i,j\le m_1$, $i\neq j$, and
$m_1+1\le i,j\le m$, $i\neq j$, then $W_a^Y(x)$ is a polynomial in $x$ of degree
\begin{equation}\label{gcd}
d=2\left[\sum_{i=0}^mu_i-\binom{m_1+1}{2}-\binom{m_2}{2}\right].
\end{equation}
Otherwise, $W_a^Y(x)$ is a polynomial in $x$ of degree strictly less than $d$.

Analogously, given polynomials $Y_i$, $i=1,\ldots, m+1$, with $\deg Y_i=u_i$, the function
\begin{equation}\label{iqussy2}
W_b^Y(x)=\frac{\begin{vmatrix}
              \no^1_{x,0}Y_1(\theta_{x}) &\no^1_{x,1} Y_1(\theta_{x-1}) & \cdots &  \no^1_{x,m}Y_1(\theta_{x-m}) \\
                            \vdots & \vdots & \ddots & \vdots \\
\no^m_{x,0}Y_m(\theta_{x}) & \no^m_{x,1} Y_m(\theta_{x-1}) & \cdots & \no^m_{x,m}Y_m(\theta_{x-m})\\
 \no^m_{x,0}Y_{m+1}(\theta_{x}) &\no^m_{x,1} Y_{m+1}(\theta_{x-1}) & \cdots &  \no^m_{x,m}Y_{m+1}(\theta_{x-m}) \\
             \end{vmatrix}}{\mathfrak p(x)\mathfrak q(x)},
\end{equation}
is a polynomial in $x$ of degree
$$
d=2\left[\sum_{i=1}^{m+1}u_i-\binom{m_1}{2}-\binom{m_2+1}{2}\right],
$$
if and only if $u_i\neq u_j$, $1\le i,j\le m_1$, $i\neq j$, and
$m_1+1\le i,j\le m+1$, $i\neq j$. Otherwise, $W_b^Y(x)$ is a polynomial in $x$ of degree strictly less than $d$.

We will also need the following combinatorial formula: if $\alpha,\beta, s, k, u$ are nonnegative integers with $s\geq\beta+k$, then
\begin{equation}\label{formulaca}
\sum_{j=0}^{s-\beta}\binom{s-\beta-k}{j-k}\binom{u+\alpha+\beta-k+j}{\beta-k+j}\binom{u+\alpha+\beta-s+k}{\alpha+\beta-s+j}=\binom{u+\alpha+\beta}{\alpha}\binom{u+s-k}{s-k}.
\end{equation}

\medskip

Part of the difficulties in the Jacobi case (compared with the Laguerre case) comes from the fact that we have to work with the following basis of the linear space of polynomials $\mathbb{R}[x]$ (instead of the usual basis of monomials):
\begin{equation}\label{basisb}
b_s(x)=\begin{cases}(1+x)^{s-1}(1-x)^{m_2},&\mbox{for $s=1,\ldots, m_1$,}\\
(1+x)^{m_1}(1-x)^{s-m_1-1},&\mbox{for $s=m_1+1,\ldots ,m$,}\\
(1+x)^{m_1}(1-x)^{m_2}x^{s-m-1},&\mbox{for $s=m+1,\ldots$}
\end{cases}
\end{equation}
Let us write $\gamma_{s}^i$ as the coefficients of the change of basis $(b_s(x))_s\to (x^{s-1})_{s-1}$, $s\geq1$:
\begin{equation}\label{qnibi}
x^i=\sum_{s=1}^{(i+1)\vee m}\gamma_s^ib_s(x),\quad i\geq0.
\end{equation}

\begin{lemma}\label{nwl}
The coefficients $\gamma_s^i$ can be recursively obtained from the following relations:
\begin{align*}
\sum_{h=1}^l(-2)^{h-l}\binom{m_2}{l-h}\gamma_{h}^i&=\frac{(-1)^{i+l+1}}{2^{m_2}}\binom{i}{l-1},\quad l=1,\ldots,m_1,\\
\sum_{h=1}^l(-2)^{h-l}\binom{m_1}{l-h}\gamma_{m_1+h}^i&=\frac{(-1)^{l+1}}{2^{m_1}}\binom{i}{l-1},\quad l=1,\ldots,m_2.
\end{align*}
\end{lemma}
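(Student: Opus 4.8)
The plan is to establish the two families of relations separately, the first by expanding both sides of \eqref{qnibi} in powers of $(1+x)$ and the second by expanding them in powers of $(1-x)$. The mechanism is the same in both cases: the three blocks of the basis \eqref{basisb} vanish to different orders at $x=\mp 1$, and this grading lets me isolate the relevant coefficients by reading off only the low-order terms of each expansion. No genuine obstacle is expected; the whole content lies in this divisibility observation, and the remaining work is bookkeeping.

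I would start with the second relation. Writing $x^i=(1-(1-x))^i=\sum_{k\ge 0}(-1)^k\binom{i}{k}(1-x)^k$, the coefficient of $(1-x)^{l-1}$ on the left of \eqref{qnibi} is $(-1)^{l-1}\binom{i}{l-1}$. On the right, the blocks $s=1,\dots,m_1$ and $s>m$ are both divisible by $(1-x)^{m_2}$, so they contribute nothing to the powers $(1-x)^0,\dots,(1-x)^{m_2-1}$; only the middle block $s=m_1+1,\dots,m$ survives. Setting $s=m_1+h$ and expanding $(1+x)^{m_1}=(2-(1-x))^{m_1}$, the coefficient of $(1-x)^{l-1}$ coming from this block is $\sum_{h=1}^{l}\gamma_{m_1+h}^i\binom{m_1}{l-h}2^{m_1-l+h}(-1)^{l-h}$ for $l=1,\dots,m_2$. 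Equating the two coefficients, dividing by $2^{m_1}$, and using $(-1)^{l-h}2^{h-l}=(-2)^{h-l}$ together with $(-1)^{l-1}=(-1)^{l+1}$, yields exactly the second identity.

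The first relation comes from the mirror-image computation at $x=-1$. Here I would write $x^i=(-1+(1+x))^i=\sum_{k\ge 0}(-1)^{i-k}\binom{i}{k}(1+x)^k$, so that the coefficient of $(1+x)^{l-1}$ equals $(-1)^{i-l+1}\binom{i}{l-1}$. Now the blocks $s=m_1+1,\dots,m$ and $s>m$ are divisible by $(1+x)^{m_1}$, so only the first block $s=1,\dots,m_1$ contributes to the powers below $m_1$; expanding $(1-x)^{m_2}=(2-(1+x))^{m_2}$ gives the coefficient $\sum_{h=1}^{l}\gamma_h^i\binom{m_2}{l-h}2^{m_2-l+h}(-1)^{l-h}$ for $l=1,\dots,m_1$. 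Matching, dividing by $2^{m_2}$, and simplifying (using $(-1)^{i-l+1}=(-1)^{i+l+1}$) produces the first identity.

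Finally, each of the two systems is lower triangular with diagonal entries equal to $1$, since the $h=l$ term carries the factor $(-2)^{0}\binom{\cdot}{0}=1$. Hence solving successively for $l=1,2,\dots$ recovers $\gamma_1^i,\dots,\gamma_{m_1}^i$ and $\gamma_{m_1+1}^i,\dots,\gamma_m^i$ one at a time, which is the sense in which the coefficients are obtained recursively. The only points demanding care are the reindexing of the binomial sums and the sign-and-power simplifications, in particular checking that the truncation to $l\le m_2$ (resp. $l\le m_1$) is precisely the range in which the two inert blocks do not interfere.
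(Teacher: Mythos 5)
Your proof is correct and follows essentially the same route as the paper's: matching the low-order Taylor coefficients of \eqref{qnibi} at $x=-1$ and $x=1$ is exactly the paper's device of differentiating $l-1$ times and evaluating at $\mp 1$, with the same key observation that the two inert blocks of the basis \eqref{basisb} vanish to order $m_1$ (resp.\ $m_2$) at the relevant endpoint. The sign and power bookkeeping, and the unit lower-triangular structure justifying the recursion, all check out.
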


\begin{proof}
It is a matter of computation, by taking derivatives in the expansion (\ref{qnibi}), evaluating at $x=1,-1,$ and using the following formulas: for $s=1,\ldots,m_1,$
\begin{equation}\label{propb1}
\left(b_s\right)^{(l)}(-1)=\begin{cases}l!2^{m_2}(-2)^{s-l-1}\displaystyle\binom{m_2}{l-s+1}&\mbox{for $l=s-1,\ldots, [m_1\wedge(s+m_2)]-1$,}\\
0,&\mbox{for $l=0,\ldots ,s-2$, or $l=s+m_2,\ldots, m_1-1$},
\end{cases}
\end{equation}
while for $s\geq m_1+1$ we have $\left(b_s\right)^{(l)}(-1)=0$ for $l=0,\ldots,m_1-1$.

And, similarly, for $s=m_1+1,\ldots,m,$ we have
$$
\left(b_s\right)^{(l)}(1)=\begin{cases}l!(-1)^{s-m_1-1}2^{s-l-1}\displaystyle\binom{m_1}{s-l-1}&\mbox{for $l=s-m_1-1,\ldots, [s\wedge m_2]-1$,}\\
0,&\mbox{for $l=0,\ldots ,s-m_1-2$, or $l=s,\ldots,m_2-1$,}
\end{cases}
$$
while for $s=1,\ldots, m_1,$ or $s\geq m+1$ we have $\left(b_s\right)^{(l)}(1)=0$ for $l=0,\ldots m_2-1$ (see proof of Lemma 2.1 in \cite{ddI3}).
\end{proof}

We also need the following technical lemma.

\begin{lemma}\label{fisi}
For $s=0,1,\ldots,m_1-1,$ and $t=0,1,\ldots,m_2-1,$ define the rational functions
$$
\phi_s(x)=\sum_{u=0}^{m_1-s-1}a_u^s(1+x)^{-u-1},\quad\psi_t(x)=\sum_{u=0}^{m_1-t-1}c_u^t(1-x)^{-u-1}.
$$
Then there exist numbers $a_u^s$ and $c_u^t$ (uniquely determined) such that for $i=1,\ldots,m_1,$ we have
\begin{equation}\label{relfi}
\sum_{l=i-1}^{[m_1\wedge(i+m_2)]-1}(-2)^{i-l-1}\binom{m_2}{l-i+1}\phi_l(x)=-(1+x)^{i-m_1-1},
\end{equation}
and for $i=m_1+1,\ldots,m,$ we have
\begin{equation*}\label{relsi}
\sum_{l=i-m_1-1}^{[m_2\wedge i]-1}(-2)^{i-l-1}\binom{m_1}{i-l-1}\psi_l(x)=-(1-x)^{i-m-1}.
\end{equation*}
\end{lemma}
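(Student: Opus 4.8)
The plan is to read \eqref{relfi} and the companion relations for the $\psi_t$ as two \emph{decoupled} square linear systems — the first involving only $\phi_0,\dots,\phi_{m_1-1}$, the second only $\psi_0,\dots,\psi_{m_2-1}$ — and to solve each one by triangular inversion. I will treat the relations for the $\phi_s$ in detail; those for the $\psi_t$ follow verbatim after interchanging the roles of $m_1$ and $m_2$ and of $1+x$ and $1-x$.

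First I would momentarily forget the prescribed support of each $\phi_s$ and regard $\phi_0,\dots,\phi_{m_1-1}$ as atomic unknowns in the space of rational functions. In the $i$-th relation \eqref{relfi} the coefficient of $\phi_l$ is $(-2)^{i-l-1}\binom{m_2}{l-i+1}$, which vanishes unless $i-1\le l\le i-1+m_2$; in particular the $i$-th relation involves only $\phi_l$ with $l\ge i-1$, and the coefficient of the diagonal term $\phi_{i-1}$ is $(-2)^{0}\binom{m_2}{0}=1$. Hence, ordering the $m_1$ relations by $i=1,\dots,m_1$ and placing $\phi_{i-1}$ in the $i$-th slot, the coefficient matrix $N=\bigl((-2)^{i-l-1}\binom{m_2}{l-i+1}\bigr)$ is upper triangular with $1$'s on the diagonal (its entries are exactly the numbers $(-2)^{\bullet}\binom{m_2}{\bullet}$ forming the unit-triangular matrix of Lemma~\ref{nwl}, up to transposition and reindexing). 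Therefore $N$ is invertible over $\RR$, and the system $N\vec\phi=\vec b$ with $b_i=-(1+x)^{i-m_1-1}$ has the unique solution $\phi_{i-1}=-\sum_{k\ge i}(N^{-1})_{i,k}(1+x)^{k-m_1-1}$.

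The step that requires care — and the only genuine obstacle — is to verify that each $\phi_s$ produced in this way \emph{automatically} lies in the linear span of $(1+x)^{-1},\dots,(1+x)^{-(m_1-s)}$, since only then are the $a_u^s$ the (uniquely determined) coordinates of $\phi_s$ in the linearly independent family $\{(1+x)^{-u-1}\}$. This is where the triangular structure pays off: the inverse of an upper-triangular matrix is again upper triangular, so $(N^{-1})_{i,k}=0$ for $k<i$, and $\phi_{i-1}$ is supported on the powers $(1+x)^{k-m_1-1}$ with $i\le k\le m_1$. Writing $k-m_1-1=-u-1$ converts this to $u=0,\dots,m_1-i=m_1-(i-1)-1$, which is precisely the range demanded for $s=i-1$; a bookkeeping check confirms that the number of nontrivial scalar equations obtained by matching powers of $(1+x)^{-1}$ equals $\binom{m_1+1}{2}$, the number of coefficients $a_u^s$, so the system is genuinely square. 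Uniqueness of the $a_u^s$ then follows from the linear independence of the negative powers of $1+x$ together with the invertibility of $N$. Finally, substituting $i=m_1+i'$ with $i'=1,\dots,m_2$ in the second system turns its coefficient into $(-2)^{\,\cdots}\binom{m_1}{\,\cdots}$, again unit upper triangular after reindexing, so the identical inversion-and-support argument yields the unique $c_u^t$ with the prescribed form, completing the proof.
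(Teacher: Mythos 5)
Your proposal is correct and takes essentially the same approach as the paper: the paper's (very terse) proof also solves the system by back-substitution starting from the $i=m_1$ (resp.\ $i=m$) equation, which is precisely the unit-triangular inversion you describe, and your tracking of the support of each $\phi_{i-1}$ just makes explicit what the paper leaves implicit. The only quibble is that after the substitution $i=m_1+i'$ the diagonal entries of the $\psi$-system are $(-2)^{m_1}\binom{m_1}{m_1}=(-2)^{m_1}$ rather than $1$, so that matrix is triangular with constant nonzero diagonal rather than unit triangular; this changes nothing in the argument.
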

\begin{proof}
The rational functions $\phi_s(x)$ can be recursively computed starting from $i=m_1$ in which case we have $\phi_{m_1-1}(x)=-(1-x)^{-1}$. From here we can compute $\phi_{m_1-2}(x),\ldots,\phi_0(x)$ recursively. The same can be applied to $\psi_t(x)$.
\end{proof}

\medskip

As we wrote in the Introduction, the most interesting case corresponds with the different families of Krall-Jacobi polynomials, orthogonal with respect to any of the measures (\ref{kjm1}), (\ref{kjm2}) or (\ref{kjm3}). In any of these cases we have $m_2\le \alpha\le \max H$ or/and $m_1\le\beta\le\max G$.
For instance, the orthogonal polynomials with respect to the measure (\ref{kjm3}) are the polynomials $(q_n)_n$ (\ref{iquss}), where $m_2\le \alpha\le \max H$, $m_1\le\beta\le\max G$,
$$
G=\{\beta,\beta+1,\ldots ,\beta+m_1-1\},\quad H=\{\alpha,\alpha+1,\ldots ,\alpha+m_2-1\},
$$
and the polynomials $\R_{g}$, $g\in G$, $\Ss_h$, $h\in H$, are defined by (\ref{pel1}) and (\ref{pel2}), respectively.

If $1\le \alpha\le m_2-1$ and/or $1\le \beta\le m_1-1$ and if we take
\begin{align*}
\R_{g_k}(\theta_x)&=u_{\beta+k-1}^\alpha(x)+\sum_{l=0}^{k+\beta-m_1-1}\frac{(\beta+k-l)_l\binom{k-1}{l}\tilde a_{k-l-1}}{(-1)^l(\beta-l)_l}
u_l^\alpha(x),\quad k=1,\ldots,m_1,\\
\Ss_{h_k}(\theta_x)&=u_{\alpha+k-1}^\alpha(x)+\sum_{l=0}^{k+\alpha-m_2+1}\frac{(\alpha+k-l)_l
\binom{k-1}{l}\tilde b_{k-l-1}}{(-1)^l(\alpha-l)_l}
u_l^\alpha(x),\quad k=1,\ldots,m_2,
\end{align*}
then the polynomials $(q_n)_n$ satisfies also three-term recurrence relations (if $u>v$ we always take $\sum_{l=u}^{v}\rho_l=0$). But these polynomials $(q_n)_n$, as in the case of Laguerre type polynomials (see \cite{ ddIlb}), are somehow degenerated. Indeed, since we have (after a combination of formulas (3.94), (3.100) and (3.107) of \cite{STW}, see also \cite[p. 203]{ddI3})
\begin{align}
\label{fff1}\left((1+x)^kJ_n^{\alpha,\beta}\right)^{(j)}(-1)&=\frac{(-1)^{j+k}j!}{2^{j-k}\binom{\alpha+\beta}{\beta}}\binom{n+\alpha+\beta}{\alpha}\binom{n+\beta}{n-j+k}\binom{n+\alpha+\beta+j-k}{j-k},\\
\nonumber\left((1-x)^kJ_n^{\alpha,\beta}\right)^{(j)}(1)&=\frac{(-1)^{n+k}j!}{2^{j-k}\binom{\alpha+\beta}{\beta}}\binom{n+\alpha+\beta}{\alpha}\binom{n+\alpha}{n-j+k}\binom{n+\alpha+\beta+j-k}{j-k},
\end{align}
it is not difficult to see that
\begin{align*}
q_n^{(j)}(1)=0,&\quad j=0,1,\ldots,m_2-\alpha-1,\quad n\geq m-\alpha-\beta,\\
q_n^{(j)}(-1)=0,&\quad j=0,1,\ldots,m_1-\beta-1,\quad n\geq m-\alpha-\beta.
\end{align*}
Moreover, it turns out that the polynomials
$$
\left(\frac{q_{n+m-\alpha-\beta}(x)}{(1-x)^{m_2-\alpha}(1+x)^{m_1-\beta}}\right)_{n\ge 0},
$$
are particular examples of the cases (\ref{pel1}) and (\ref{pel2}) for certain choice of the parameters involved.

\section{Orthogonality properties}\label{jorth}
In this section we will establish some orthogonality properties for the polynomials $(q_n)_n$ in \eqref{iquss}. We start with the case $\alpha-\max H\neq 0, -1, -2, \ldots,$ and $\beta-\max G\neq 0,-1, -2, \ldots$. Using the polynomials $u_j^{\lambda}(x)$ (\ref{basu}), we can always write
\begin{equation}\label{defrs}
\mathcal{R}_g(\theta_x)=\sum_{s=0}^g\nu_s^gu_s^{\alpha}(x),\quad \mathcal{S}_h(\theta_x)=\sum_{s=0}^h\omega_s^hu_s^{\alpha}(x),
\end{equation}
for certain numbers $\nu_s^g,s=0,1,\ldots,g,$ and $\omega_s^h,s=0,1,\ldots,h,$ ($\nu_g^g=1, \omega_h^h=1$), which are uniquely determined from the polynomials $\mathcal{R}_g$ and $\mathcal{S}_h$.

For each $l=0,1,\ldots m_1-1$, we introduce the rational function $U_l$  defined by
\begin{equation}
\label{Uig}U_l(x)=\phi_l(x)+\frac{\kappa_l}{\Gamma(\beta)}\sum_{s=0}^{g_{l+1}}
\frac{(\beta-s)_s2^ss!\nu_s^{g_{l+1}}}{(1+x)^{s+1}},
\end{equation}
where the rational functions $\phi_l(x), l=0,1,\ldots m_1-1,$ are defined in Lemma \ref{fisi} and $\kappa_l$, $l=0,\ldots , m_1-1$, are real numbers.

Similarly, for each $l=0,1,\ldots m_2-1$, the rational function $V_l$ is defined by
\begin{equation}
\label{Vig}V_l(x)=\psi_l(x)+
\frac{\tau _l}{\Gamma(\alpha)}\sum_{s=0}^{h_{l+1}}
\frac{(\alpha-s)_s2^ss!\omega_s^{h_{l+1}}}{(1-x)^{s+1}},
\end{equation}
where the rational functions $\psi_l(x), l=0,1,\ldots m_2-1,$ are defined in Lemma \ref{fisi} and $\tau_l$, $l=0,\ldots , m_2-1$, are real numbers.

Let us consider now the following bilinear form, with respect to which the polynomials $(q_n)_{n\ge m}$ in \eqref{iquss} will be (left) orthogonal. For real numbers $\alpha$ and $\beta$ such that $\alpha-\max H\neq 0, -1, -2, \ldots,$ and $\beta-\max G\neq 0,-1, -2, \ldots,$ we consider
\begin{equation}\label{inner}
\langle p,q\rangle=\langle p,q\rangle_1+\langle p,q\rangle_2+\langle p,q\rangle_3,
\end{equation}
where
\begin{equation*}\label{inner1}
\langle p,q\rangle_1=\int_{-1}^1p(x)q(x)\mu_{\alpha-m_2,\beta-m_1}(x)dx,
\end{equation*}
and $\mu_{\alpha,\beta}$ is the Jacobi weight given by \eqref{pJac},
\begin{equation}\label{inner2}
\langle p,q\rangle_2=\sum_{l=0}^{m_1-1}\frac{q^{(l)}(-1)}{2^{m_2}l!}\int_{-1}^1p(x)U_l(x)\mu_{\alpha,\beta}(x)dx,
\end{equation}
where $U_l$ is defined by \eqref{Uig}, and
\begin{equation}\label{inner3}
\langle p,q\rangle_3=\sum_{l=0}^{m_2-1}\frac{q^{(l)}(1)}{(-1)^{m_1+l}l!}\int_{-1}^1p(x)V_l(x)\mu_{\alpha,\beta}(x)dx.
\end{equation}
where $V_l$ is defined by \eqref{Vig}.

The following lemma will be the key for most of our results.
\begin{lemma}\label{lemwenok}
Let $k,n\geq0$, $j=0,1,\ldots,n$ and $n-j\ge k$. For $i=1,\ldots,m_1,$ we have
$$
\langle (1+x)^kJ_{n-j}^{\alpha,\beta},b_i\rangle=\frac{c_{n,i}\rho_{n,j}^i}{(-1)^j}\left[\sum_{l=i-1}^{[m_1\wedge (m_2+i)]-1}\frac{\kappa_l\binom{m_2}{l-i+1}}{(-2)^{l+1}} \sum_{s=k}^{g_{l+1}}2^k\nu_s^{g_{l+1}}(\beta-s)_k(s-k+1)_ku_{s-k}^{\alpha}(n-j)\right],
$$
where $c_{n,i}=(-1)^{m+i}2^{\alpha+\beta+i}
\Gamma(\beta+1)\Gamma(n+\alpha-m+1)/(\Gamma(\alpha+\beta+1)\Gamma(n+\beta))$ and $\rho_{x,j}^i$ is defined by \eqref{defno}. Similarly, for $i=m_1+1,\ldots,m,$ we have
$$
\langle (1-x)^kJ_{n-j}^{\alpha,\beta},b_i\rangle=\frac{d_{n,i}}{(-1)^j}\left[\sum_{l=i-m_1-1}^{[m_2\wedge i]-1}\frac{\tau_l\binom{m_1}{i-l-1}}{(-2)^{l+1}}\sum_{s=k}^{h_{l+1}}2^k\omega_s^{h_{l+1}}(\alpha-s)_k(s-k+1)_k
u_{s-k}^{\alpha}(n-j)\right],
$$
where $d_{n,i}=(-1)^{n+i}2^{\alpha+\beta+i}\Gamma(\beta+1)/\Gamma(\alpha+\beta+1)$.
\end{lemma}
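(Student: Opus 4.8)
The plan is to evaluate the three pieces of the bilinear form \eqref{inner} separately and show that, after the contribution of $\langle\cdot,\cdot\rangle_1$ is cancelled, only the singular (negative-power) terms survive. Fix $i=1,\ldots,m_1$ and put $p=(1+x)^kJ_{n-j}^{\alpha,\beta}$. Since $b_i^{(l)}(1)=0$ for $l=0,\ldots,m_2-1$ (the derivative values recorded in the proof of Lemma \ref{nwl}), the summand \eqref{inner3} gives $\langle p,b_i\rangle_3=0$. Inserting the values $b_i^{(l)}(-1)$ from \eqref{propb1} into \eqref{inner2} rewrites
$\langle p,b_i\rangle_2=\sum_{l=i-1}^{[m_1\wedge(i+m_2)]-1}(-2)^{i-l-1}\binom{m_2}{l-i+1}\int_{-1}^1 pU_l\,\mu_{\alpha,\beta}$,
and I then split each $U_l$ according to \eqref{Uig} into its regular part $\phi_l$ and its singular part.

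First I would dispose of the regular part. Writing $L(f)=\int_{-1}^1(1+x)^kJ_{n-j}^{\alpha,\beta}(x)f(x)\mu_{\alpha,\beta}(x)\,dx$, the weight identity $b_i(x)\mu_{\alpha-m_2,\beta-m_1}(x)=(1+x)^{i-1-m_1}\mu_{\alpha,\beta}(x)$ gives $\langle p,b_i\rangle_1=L\bigl((1+x)^{i-1-m_1}\bigr)$, while the $\phi_l$-contributions sum to $L\bigl(\sum_l(-2)^{i-l-1}\binom{m_2}{l-i+1}\phi_l\bigr)$. By relation \eqref{relfi} of Lemma \ref{fisi} the inner sum equals $-(1+x)^{i-1-m_1}$, so by linearity the regular part of $\langle p,b_i\rangle_2$ cancels $\langle p,b_i\rangle_1$ exactly; this is precisely the role the functions $\phi_l$ were designed to play. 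What remains is $\langle p,b_i\rangle=\sum_l(-2)^{i-l-1}\binom{m_2}{l-i+1}\frac{\kappa_l}{\Gamma(\beta)}\sum_{s=0}^{g_{l+1}}(\beta-s)_s 2^s s!\,\nu_s^{g_{l+1}}\,L\bigl((1+x)^{-s-1}\bigr)$.

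The core computation is the single integral $L\bigl((1+x)^{-s-1}\bigr)=\int_{-1}^1(1+x)^{k-s-1}J_{n-j}^{\alpha,\beta}\mu_{\alpha,\beta}\,dx$. I would expand $J_{n-j}^{\alpha,\beta}$ in the basis $(x-1)^{n-j-r}(x+1)^{r}$, integrate each term against $(1-x)^\alpha(1+x)^{\beta+k-s-1}$ by the Beta integral (the total exponent $n-j+\alpha+\beta+k-s$ being independent of $r$), and recognize the surviving sum as the Gauss value ${}_2F_1\bigl(-(n-j),\,\beta+k-s;\,\beta+1;\,1\bigr)$, which the Chu--Vandermonde identity evaluates in closed form. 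Collecting Gamma factors yields
\[
\int_{-1}^1(1+x)^{k-s-1}J_{n-j}^{\alpha,\beta}\mu_{\alpha,\beta}\,dx
= u_{s-k}^\alpha(n-j)\,\frac{\Gamma(\beta+1)\Gamma(n-j+\alpha+1)\,2^{\alpha+\beta+k-s}\,\Gamma(\beta+k-s)}{\Gamma(\alpha+\beta+1)\,\Gamma(s-k+1)\,\Gamma(n-j+\beta+1)},
\]
with $u_{s-k}^\alpha$ as in \eqref{basu}; for integer parameters one first works in a range where the integrals converge and then extends by analytic continuation. The factor $1/\Gamma(s-k+1)$ (equivalently the Pochhammer $(s-k+1)_{n-j}$) vanishes for $0\le s\le k-1$, so only $s\ge k$ contributes, consistently with the orthogonality of $J_{n-j}^{\alpha,\beta}$ against lower-degree polynomials.

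To finish I would substitute this value, simplify the $s$-dependent constants through the Pochhammer identity $(\beta-s)_s\,s!\,\Gamma(\beta+k-s)/\Gamma(s-k+1)=\Gamma(\beta)(\beta-s)_k(s-k+1)_k$ (so the prefactor $1/\Gamma(\beta)$ disappears), and rewrite $(-2)^{i-l-1}=(-1)^i2^i/(-2)^{l+1}$ to reach the claimed coefficient $\kappa_l\binom{m_2}{l-i+1}/(-2)^{l+1}$. A direct check then identifies the leftover prefactor with $c_{n,i}\rho_{n,j}^i/(-1)^j$ via \eqref{defno} and the definition of $c_{n,i}$; the ratio $\Gamma(n-j+\alpha+1)/\Gamma(n-j+\beta+1)$ coming from the integral is exactly what $\rho_{n,j}^i$ supplies. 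The case $i=m_1+1,\ldots,m$ runs along identical lines with $(1-x)^k$ in place of $(1+x)^k$, with $V_l$, $\psi_l$ and the companion relation of Lemma \ref{fisi} in place of $U_l$, $\phi_l$, \eqref{relfi}: now $\langle\cdot,\cdot\rangle_2$ drops out and $\langle\cdot,\cdot\rangle_3$ survives. There I would recompute $\int_{-1}^1(1-x)^{k-s-1}J_{n-j}^{\alpha,\beta}\mu_{\alpha,\beta}\,dx$ directly rather than invoke the symmetry $x\mapsto-x$, because the chosen normalization of $J_n^{\alpha,\beta}$ is not symmetric in $\alpha,\beta$; this is precisely why $\rho_{n,j}^i=1$ in that range while the constant $d_{n,i}$ still carries $\Gamma(\beta+1)$ rather than $\Gamma(\alpha+1)$, and why $u_{s-k}^\alpha$ reappears (it depends only on $\alpha+\beta$). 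I expect the main obstacle to be the bookkeeping of the many Gamma and sign factors in this final matching step, together with the care required to justify the negative-power integrals by analytic continuation.
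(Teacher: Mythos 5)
Your proposal is correct and follows essentially the same route as the paper: kill $\langle\cdot,\cdot\rangle_3$ via \eqref{propb1}, expand $\langle\cdot,\cdot\rangle_2$ with the derivative values $b_i^{(l)}(-1)$, cancel $\langle\cdot,\cdot\rangle_1$ against the $\phi_l$'s using \eqref{relfi}, and evaluate the remaining moments $\int_{-1}^1 J_{n-j}^{\alpha,\beta}\mu_{\alpha,\beta+k-s-1}\,dx$ before collecting the Gamma factors into $c_{n,i}\rho_{n,j}^i u_{s-k}^{\alpha}(n-j)$. The only difference is that you re-derive the moment formula \eqref{formJ1} via Beta integrals and Chu--Vandermonde, whereas the paper simply cites it from \cite{ddI3}; your closed form and the Pochhammer simplification both check out against that formula.
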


\begin{proof}
We will need the following formula, which can be found in \cite[p. 203]{ddI3}: for $l=1,\ldots,m_1$, we have
\begin{equation}\label{formJ1}
\int_{-1}^1J_n^{\alpha,\beta}(x)\mu_{\alpha,\beta-l}(x)dx=
\frac{\Gamma(\beta+1)\Gamma(\beta-l+1)}{2^{l-\alpha-\beta-1}\Gamma(\alpha+\beta+1)}
\binom{n+l-1}{l-1}\Gamma_{\beta,\alpha+\beta-l+1}^{\alpha,\alpha+\beta}(n).
\end{equation}
Now, for $i=1,\ldots,m_1,$ we have, using (\ref{propb1}), that $\langle (1+x)^kJ_{n-j}^{\alpha,\beta},b_i\rangle_3=0$. Using the definition of $b_i(x)$ and $U_l$ in \eqref{basisb} and \eqref{Uig}, respectively, and (\ref{propb1}) again, we have
\begin{align*}
\langle & (1+x)^kJ_{n-j}^{\alpha,\beta},b_i\rangle=\int_{-1}^1J_{n-j}^{\alpha,\beta}\mu_{\alpha,\beta+k+i-m_1-1}dx\\
&\quad+\sum_{l=i-1}^{[m_1\wedge(m_2+i)]-1}\frac{\binom{m_2}{l-i+1}}{(-2)^{l-i+1}}\int_{-1}^1J_{n-j}^{\alpha,\beta}\left(\phi_l(x)+\frac{\kappa_l}{\Gamma(\beta)}\sum_{s=0}^{g_{l+1}}\frac{(\beta-s)_s2^ss!\nu_s^{g_{l+1}}}{(1+x)^{s+1}}\right)\mu_{\alpha,\beta+k}dx\\
&=\sum_{l=i-1}^{[m_1\wedge(m_2+i)]-1}\frac{\kappa_l\binom{m_2}{l-i+1}}{(-2)^{l-i+1}\Gamma(\beta)}\sum_{s=0}^{g_{l+1}}(\beta-s)_s2^ss!\nu_s^{g_{l+1}}\int_{-1}^1J_{n-j}^{\alpha,\beta}\mu_{\alpha,\beta+k-s-1}dx\\
&=\sum_{l=i-1}^{[m_1\wedge(m_2+i)]-1}\frac{\kappa_l\binom{m_2}{l-i+1}}{(-2)^{l-i+1}\Gamma(\beta)}\times\\
&\qquad\times\sum_{s=0}^{g_{l+1}}(\beta-s)_s2^ss! \nu_s^{g_{l+1}}\left[\frac{\Gamma(\beta+1)\Gamma(\beta-s+k)}{2^{s-k-\alpha-\beta}\Gamma(\alpha+\beta+1)}\binom{n+s-k}{s-k}\Gamma_{\beta,\alpha+\beta-s+k}^{\alpha,\alpha+\beta}(n-j)\right]\\
&=c_{n,i}(-1)^j\rho_{n,j}^i\left[\sum_{l=i-1}^{[m_1\wedge(m_2+i)]-1}\frac{\kappa_l\binom{m_2}{l-i+1}}{(-2)^{l+1}}\sum_{s=k}^{g_{l+1}}2^k\nu_s^{g_{l+1}}(\beta-s)_k(s-k+1)_ku_{s-k}^{\alpha}(n-j)\right].
\end{align*}
The second step follows as a consequence of Lemma \ref{fisi} (see \eqref{relfi}), while the third step follows from \eqref{formJ1} and the definition of $\rho_{n,j}^i$ in \eqref{defno} (see also \eqref{gammas}) and the definition of $u_j^\lambda(x)$ in \eqref{basu}. The second identity can be proved in a similar way.
\end{proof}

\begin{remark}\label{remk0}
Observe that for $k=0$ we have, using \eqref{defrs}, the simplified formulas
\begin{equation*}
\langle J_{n-j}^{\alpha,\beta},b_i\rangle=c_{n,i}(-1)^j\rho_{n,j}^i
\sum_{l=i-1}^{[m_1\wedge(m_2+i)]-1}\frac{\kappa_l\binom{m_2}{l-i+1}}{(-2)^{l+1}}\mathcal{R}_{g_{l+1}}(\theta_{n-j}),
\end{equation*}
for $i=1,\ldots,m_1,$ and
\begin{equation*}
\langle J_{n-j}^{\alpha,\beta},b_i\rangle=d_{n,i}(-1)^j\sum_{l=i-m_1-1}^{[m_2\wedge i]-1}\frac{\tau_l\binom{m_1}{i-l-1}}{(-2)^{l+1}}\mathcal{S}_{h_{l+1}}(\theta_{n-j}),
\end{equation*}
for $i=m_1+1,\ldots,m,$ using the notation in Lemma \ref{lemwenok}.
\end{remark}

We are now ready to prove the orthogonality properties for the polynomials $(q_n)_n$.

\begin{theorem}\label{thoj}
Let $\alpha$ and $\beta$ be real numbers with $\alpha-\max H\neq 0, -1, -2, \ldots,$ and $\beta-\max G\neq 0,-1, -2, \ldots$. Assume that conditions \eqref{assum} hold. Then  for $n\geq m$, the polynomials $(q_n)_n$ defined by \eqref{iquss} satisfy the following orthogonality properties with respect to the bilinear from \eqref{inner}:
\begin{align*}
\langle q_n,q_i\rangle&=0,\quad i=0,1,\ldots,n-1,\\
\langle q_n,q_n\rangle&\neq0.
\end{align*}
\end{theorem}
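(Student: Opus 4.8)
The plan is to establish the (left) orthogonality not directly against $q_0,\dots,q_{n-1}$ but against the basis elements $b_i$ of \eqref{basisb}. Since assumption \eqref{assum} forces $\deg q_i=i$, the family $q_0,\dots,q_{n-1}$ is a basis of the space of polynomials of degree at most $n-1$, and $\langle q_n,\cdot\rangle$ is linear in its second slot; hence $\langle q_n,q_i\rangle=0$ for all $i<n$ is equivalent to $\langle q_n,b_i\rangle=0$ for every $b_i$ of degree at most $n-1$. A direct reading of the degrees in \eqref{basisb} shows $\deg b_i\le n-1$ for $i=1,\dots,n$, so these $n$ linearly independent polynomials span the whole $n$-dimensional space of polynomials of degree at most $n-1$. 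It therefore suffices to prove $\langle q_n,b_i\rangle=0$ for $i=1,\dots,n$, and to handle $\langle q_n,q_n\rangle\neq0$ afterwards.

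For $i=1,\dots,m$ I would substitute the expansion $q_n=\sum_{j=0}^m\beta_{n,j}J_{n-j}^{\alpha,\beta}$ from \eqref{qqbeta} (here $n\ge m$) together with the $k=0$ formulas of Remark \ref{remk0}. For $i\le m_1$ this writes $\langle q_n,b_i\rangle$ as a finite combination, over $l$, of the inner sums $\sum_{j=0}^m(-1)^j\beta_{n,j}\rho_{n,j}^i\mathcal{R}_{g_{l+1}}(\theta_{n-j})$. The crucial point is that by \eqref{defno} the factor $\rho_{n,j}^i$ does not depend on $i$ within the block $i\le m_1$; consequently each inner sum is exactly the left-hand side of the vanishing identity \eqref{vanisz} for the index $l+1\le m_1$, and so it vanishes. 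The range $m_1+1\le i\le m$ is treated identically, using the $\mathcal{S}_{h_{l+1}}$-part of Remark \ref{remk0} (where $\rho_{n,j}^i=1$) and \eqref{vanisz} for the indices $m_1+l+1$.

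For $m+1\le i\le n$ the basis element $b_i=(1+x)^{m_1}(1-x)^{m_2}x^{i-m-1}$ vanishes to order $m_1$ at $x=-1$ and order $m_2$ at $x=1$, so all the derivative data $b_i^{(l)}(\pm1)$ entering \eqref{inner2}--\eqref{inner3} are zero and only $\langle q_n,b_i\rangle_1$ survives. Since $b_i(x)\mu_{\alpha-m_2,\beta-m_1}(x)=x^{i-m-1}\mu_{\alpha,\beta}(x)$, this reduces to $\sum_j\beta_{n,j}\int_{-1}^1 J_{n-j}^{\alpha,\beta}x^{i-m-1}\mu_{\alpha,\beta}\,dx$, which vanishes by plain Jacobi orthogonality because $i-m-1<n-j$ for every $j\le m$ exactly when $i\le n$. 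For the non-degeneracy I would test $q_n$ against $b_{n+1}=(1+x)^{m_1}(1-x)^{m_2}x^{n-m}$ (degree $n$): the same reduction leaves only $\beta_{n,m}\int_{-1}^1 J_{n-m}^{\alpha,\beta}x^{n-m}\mu_{\alpha,\beta}\,dx$, which is nonzero since $\beta_{n,m}\neq0$ by \eqref{betajm} and \eqref{assum} and the Jacobi norm is nonzero for $\alpha,\beta,\alpha+\beta\neq-1,-2,\dots$. Writing $b_{n+1}=\sum_{i=0}^n\lambda_iq_i$ with $\lambda_n\neq0$ and using the orthogonality already shown gives $\langle q_n,b_{n+1}\rangle=\lambda_n\langle q_n,q_n\rangle$, so $\langle q_n,q_n\rangle\neq0$.

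The step I expect to require the most care is the index matching in the second paragraph: the formulas of Lemma \ref{lemwenok} carry the index $i$ through $\rho_{n,j}^i$, whereas the vanishing identity \eqref{vanisz} is indexed by $l+1$ (resp. $m_1+l+1$), and the two align only because $\rho_{n,j}^i$ is constant in $i$ on each of the two index blocks determined by \eqref{defno}. A secondary point needing attention is the degree bookkeeping for the mixed-degree basis \eqref{basisb}, which must certify both that $b_1,\dots,b_n$ span all polynomials of degree at most $n-1$ and that $b_{n+1}$ provides the remaining degree-$n$ direction used in the non-degeneracy argument.
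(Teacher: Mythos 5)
Your proposal is correct and follows essentially the same route as the paper: both rest on the $k=0$ case of Lemma \ref{lemwenok} (Remark \ref{remk0}), the vanishing identity \eqref{vanisz} (which applies precisely because $\rho_{n,j}^i$ is constant on each index block of \eqref{defno}), plain Jacobi orthogonality for the part of the pairing coming from $b_s$ with $s\ge m+1$, and $\beta_{n,m}\neq0$ from \eqref{betajm} for the non-degeneracy. The only difference is cosmetic: you test $q_n$ directly against the basis $(b_i)_i$, whereas the paper tests against $x^i$ after expanding $x^i=\sum_s\gamma_s^i b_s$ via \eqref{qnibi}, which amounts to the same computation.
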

\begin{proof}
For $n\geq m$, let $q_n$ be written as in \eqref{qqbeta} and $x^i$ as in \eqref{qnibi}. Then, using Lemma \ref{lemwenok} for $k=0$ (see Remark \ref{remk0}), we have
\begin{align*}
\langle q_n,x^i\rangle&=\sum_{j=0}^{m\wedge n}\beta_{n,j}\sum_{s=1}^{(i+1)\vee m}\gamma_s^i\langle J_{n-j}^{\alpha,\beta}, b_s\rangle\\
&=\sum_{j=0}^{m\wedge n}\beta_{n,j}\left(\sum_{s=1}^{m_1}\gamma_s^ic_{n,s}(-1)^j\rho_{n,j}^s
\left[\sum_{l=s-1}^{[m_1\wedge(m_2+s)]-1}\frac{\kappa_l\binom{m_2}{l-s+1}}{(-2)^{l+1}}\mathcal{R}_{g_{l+1}}(\theta_{n-j})\right]\right.\\
&\qquad+\sum_{s=m_1+1}^m\gamma_s^id_{n,s}(-1)^j\left[\sum_{l=s-m_1-1}^{[m_2\wedge s]-1}\frac{\tau_l\binom{m_1}{s-l-1}}{(-2)^{l+1}}\mathcal{S}_{h_{l+1}}(\theta_{n-j})\right]\\
&\left.\qquad+\sum_{s=m+1}^{(i+1)\vee m}\gamma_s^i\int_{-1}^1J_{n-j}^{\alpha,\beta}(x)x^{s-m-1}\mu_{\alpha,\beta}dx\right)\\
&=\sum_{s=1}^{m_1}\gamma_s^ic_{n,s}\sum_{l=s-1}^{[m_1\wedge(m_2+s)]-1}\frac{\kappa_l\binom{m_2}{l-s+1}}{(-2)^{l+1}}\left(\sum_{j=0}^{m\wedge n}\beta_{n,j}(-1)^j\rho_{n,j}^s\mathcal{R}_{g_{l+1}}(\theta_{n-j})\right)\\
&\qquad+\sum_{s=m_1+1}^m\gamma_s^id_{n,s}\sum_{l=s-m_1-1}^{[m_2\wedge s]-1}\frac{\tau_l\binom{m_1}{s-l-1}}{(-2)^{l+1}}\left(\sum_{j=0}^{m\wedge n}\beta_{n,j}(-1)^j\mathcal{S}_{h_{l+1}}(\theta_{n-j})\right)\\
&\qquad+\sum_{j=0}^{m\wedge n}\beta_{n,j}\sum_{s=m+1}^{(i+1)\vee m}\gamma_s^i\int_{-1}^1J_{n-j}^{\alpha,\beta}(x)x^{s-m-1}\mu_{\alpha,\beta}dx.
\end{align*}
Assume first that $m\le i\le n-1$. In this case the third addend in the previous formula vanishes because of the orthogonality of the Jacobi polynomials. Indeed, the power of $x^{s-m-1}$ goes from $0$ to $i-m$ (since $i\geq m$) and we always have in this situation that $i-m<n-j$ for $j=0,\ldots,m\wedge n=m$. The other two addends also vanish as a consequence of \eqref{vanisz}, since in this case we have $m\wedge n=m$. For $i=n\geq m$ we have $\langle q_n,x^n\rangle\neq0$ as a consequence of the orthogonality properties for the Jacobi polynomials and that $\beta_{m,m}\not =0$ (see \eqref{betajm}).

Let us now assume that $0\leq i\leq m-1$. In this case there is no third addend in the previous formula (since $(i+1)\vee m<m+1$). Therefore we have
\begin{align}\nonumber
\langle q_n,x^i\rangle&=\sum_{s=1}^{m_1}\gamma_s^ic_{n,s}\sum_{l=s-1}^{[m_1\wedge(m_2+s)]-1}\frac{\kappa_l\binom{m_2}{l-s+1}}{(-2)^{l+1}}\left(\sum_{j=0}^{m\wedge n}\beta_{n,j}(-1)^j\rho_{n,j}^s\mathcal{R}_{g_{l+1}}(\theta_{n-j})\right)\\\label{npf}
&\qquad+\sum_{s=m_1+1}^m\gamma_s^id_{n,s}\sum_{l=s-m_1-1}^{[m_2\wedge s]-1}\frac{\tau_l\binom{m_1}{s-l-1}}{(-2)^{l+1}}\left(\sum_{j=0}^{m\wedge n}\beta_{n,j}(-1)^j\mathcal{S}_{h_{l+1}}(\theta_{n-j})\right).
\end{align}
For $n\geq m$ we have $m\wedge n=m$ and, as before, the two first addends vanish as a consequence of \eqref{vanisz}.
\end{proof}

Notice that, we have not assumed any constrain on the real numbers $\kappa_l$, $l=0,\ldots,m_1-1,$ and  $\tau_l$, $l=0,\ldots,m_2-1$. In general, the orthogonality properties in Theorem \ref{thoj} can not be extended for $n=0,\ldots , m-1$. This fact would imply some problems to prove the bispectrality of the polynomials $(q_n)_n$. We will avoid those problems by using the following lemma.

\begin{lemma}\label{cdsm} Let $\alpha$ and $\beta$ be real numbers with $\alpha-\max H\neq 0, -1, -2, \ldots,$ and $\beta-\max G\neq 0,-1, -2, \ldots$. Assume that conditions \eqref{assum} hold and that $\kappa_l =1$, $l=0,\ldots,m_1-1$ and  $\tau_l=1$, $l=0,\ldots,m_2-1$. Then the matrix $A$ with entries $A_{i,j}=\langle q_j,x^i\rangle $, $i,j=0,\ldots, m-1$, is nonsingular.
\end{lemma}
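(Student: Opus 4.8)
The plan is to show that, after an invertible change of basis, $A$ splits into invertible factors times a core determinant that is a nonzero multiple of $\prod_{n=0}^{m}\Lambda_{G,H}(n)$, so that \eqref{assum} forces $\det A\neq0$. First I would replace the monomial basis by $(b_s)_{s=1}^{m}$. Since $(i+1)\vee m=m$ for $0\le i\le m-1$, relation \eqref{qnibi} gives $x^{i}=\sum_{s=1}^{m}\gamma_{s}^{i}b_{s}$, hence $A=\Gamma B$ with $\Gamma=(\gamma_{s}^{i})_{i,s}$ and $B_{s,j}=\langle q_{j},b_{s}\rangle$; indeed this is exactly the factorization read off from \eqref{npf}. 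The matrix $\Gamma$ is nonsingular because $b_{1},\dots,b_{m}$ are linearly independent: the orders of vanishing at $x=-1$ and $x=1$ recorded in \eqref{propb1} make the first $m_{1}$ of them triangular with respect to the vanishing order at $-1$ and the last $m_{2}$ triangular with respect to that at $1$. Thus it suffices to prove $\det B\neq0$.

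Second I would compute the entries of $B$. Expanding $q_{j}$ by \eqref{qqbeta} and using Remark \ref{remk0} (with $\kappa_{l}=\tau_{l}=1$), each $\langle q_{j},b_{s}\rangle$ equals a prefactor ($c_{j,s}$ for $s\le m_1$, $d_{j,s}$ for $s>m_1$, finite and nonzero under the hypotheses on $\alpha,\beta$) times $\sum_{j'=0}^{j}(-1)^{j'}\beta_{j,j'}\rho_{j,j'}^{s}\,\mathcal P_{s}(\theta_{j-j'})$, where $\mathcal P_{s}$ is the triangular combination of $\mathcal R_{g}$'s (resp.\ $\mathcal S_{h}$'s) from Lemma \ref{lemwenok}, whose highest term has coefficient $(-2)^{-s}\neq0$ precisely because $\kappa_{l}=\tau_{l}=1$. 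So the passage $(\mathcal R_{g},\mathcal S_{h})\mapsto(\mathcal P_{s})$ is an invertible triangular change, and stripping it together with the nonzero prefactors reduces $\det B\neq0$ to $\det N\neq0$, where $N_{l,n}=\sum_{j'=0}^{n}(-1)^{j'}\beta_{n,j'}\rho_{n,j'}^{l}\mathcal Z_{l}(\theta_{n-j'})$ for $l=1,\dots,m$, $n=0,\dots,m-1$. Using \eqref{vanisz} (valid for each row $\mathcal Z_l$, hence by linearity for $\mathcal P_s$), I would rewrite the head $\sum_{j'=0}^{n}$ as $-\sum_{j'=n+1}^{m}$.

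Third I would extract the Casoratian factors. After reindexing $t=j'-n$, column $n$ of $N$ involves only the vectors $(\rho_{n,n+t}^{l}\mathcal Z_{l}(\theta_{-t}))_{l}$ for $t=1,\dots,m-n$, with coefficients $\beta_{n,n+t}$; thus $N$ is anti-triangular in $(t,n)$, the extreme coefficient in column $n$ being $\beta_{n,m}$, which by \eqref{betajm} is a nonzero multiple of $\Lambda_{G,H}(n+1)$, while the block of boundary values $\mathcal Z_{l}(\theta_{-t})$ is governed by \eqref{casdet}, i.e.\ by $\Lambda_{G,H}(0)$. The cleanest way to see the $\Lambda$-factors is divisibility: if $\Lambda_{G,H}(n_{0})=0$ for some $0\le n_{0}\le m-1$, then $q_{n_{0}}$ drops degree, the columns of $A$ become dependent, and $\det A=0$; and if $\Lambda_{G,H}(m)=0$ then $\beta_{m-1,m}=0$ by \eqref{betajm}, so the single-term tail column $n=m-1$ of $B$ (proportional to $\beta_{m-1,m}$ for every $s$) vanishes and again $\det B=0$. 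Since the $\Lambda_{G,H}(n)$ are pairwise coprime polynomials in the coefficients of $\mathcal R,\mathcal S$, and a degree count shows that both $\det B$ and $\prod_{n=0}^{m}\Lambda_{G,H}(n)$ have multidegree $m+1$ in each row's coefficients, this yields $\det B=c\,\prod_{n=0}^{m}\Lambda_{G,H}(n)$ with a constant $c$ depending only on $\alpha,\beta,G,H$; evaluating at any nondegenerate configuration gives $c\neq0$, and then \eqref{assum} makes every factor nonzero.

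Finally, the main obstacle. In the one-family setting of \cite{ddIlb} the weights $\rho^{l}$ are uniform, so $N$ factors at once as a generalized Vandermonde times an anti-triangular Casoratian matrix and $\det N$ is a single product. Here the rows coming from $G$ carry the nontrivial weight $\rho_{n,j}^{l}$ while those coming from $H$ carry weight $1$, so $N$ does \emph{not} split as one such product and a naive expansion produces many Cauchy–Binet cross terms. The hard part is therefore disentangling the two families: either controlling those cross terms so that they reassemble into $\prod_{n}\Lambda_{G,H}(n)$, or (the route I would take) making the divisibility argument rigorous—clearing the $\mathfrak p\mathfrak q$ denominators of \eqref{casdet} and using square-freeness of the Casoratian—after which only the evaluation of the universal constant $c$ remains.
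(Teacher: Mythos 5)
Your first two steps track the paper's argument closely: the paper also expands $x^i$ in the basis $(b_s)_s$, applies Lemma \ref{lemwenok} with $k=0$, uses \eqref{vanisz} to convert $\sum_{l=0}^{j}$ into $-\sum_{l=j+1}^{m}$, and uses Lemma \ref{nwl} to collapse the triangular $\binom{m_2}{\cdot}/(-2)^{\cdot}$ combinations into plain binomials $\binom{i}{s}$. Where you diverge is exactly at what you call ``the main obstacle,'' and here the paper has an explicit resolution that you missed: the identity $\rho_{n,n+j}^{s}\,c_{n,s}=\frac{(-1)^{j}\Gamma(\alpha-j+1)}{\Gamma(\beta-j+1)}\,d_{n,s}$ (a direct consequence of \eqref{defno}) shows that, after the prefactors are absorbed, the weight attached to the $G$-rows depends only on the shift $j$ and \emph{not} on $n$. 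Consequently the matrix $\bigl(\Xi(i,m-j)\bigr)_{i,j}$ splits exactly as a product $CD$ with no Cauchy--Binet cross terms: $C$ is an explicit binomial matrix with $\det C=(-1)^{\binom{m_2}{2}}$, and $D$ is precisely the boundary Casoratian, whose determinant is a nonzero explicit multiple of $\Lambda_{G,H}(0)$ by \eqref{casdet}. Together with the anti-triangular structure (whose corner entries are the $\beta_{j,m}$, each a nonzero multiple of $\Lambda_{G,H}(j+1)$ by \eqref{betajm}), this yields the closed form \eqref{spm1} and the lemma follows from \eqref{assum} with no divisibility argument needed.

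Your fallback divisibility route has genuine gaps as written. First, the claim that $\Lambda_{G,H}(n_0)=0$ for some $0\le n_0\le m-1$ forces the columns of $A$ to be dependent does not follow from the degree drop of $q_{n_0}$ alone: $\bigl(\langle q_{n_0},x^i\rangle\bigr)_i$ is not obviously a combination of the other columns, and Theorem \ref{thoj} is unavailable once \eqref{assum} fails. Second, coprimality and square-freeness of the $\Lambda_{G,H}(n)$ as polynomials in the coefficients of the $\mathcal R_g,\mathcal S_h$ is asserted but would itself require proof. Third, and most seriously, determining the constant $c$ by ``evaluating at any nondegenerate configuration'' is circular: to conclude $c\neq0$ you must exhibit one configuration with $\det B\neq0$, which is essentially the statement of Lemma \ref{cdsm}. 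So the proposal is not a complete proof; the missing ingredient is the separation-of-variables identity above, which is what lets the paper compute $\det A$ exactly rather than only up to an undetermined constant.
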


\begin{proof}
We will prove that $\det A\not =0$. First we will find an explicit expression for $\langle q_j,x^i\rangle $, $i,j=0,\ldots, m-1$.
For $0\leq j\leq m-1$, using \eqref{vanisz}, we can write
\begin{align*}
\sum_{l=0}^j(-1)^l\beta_{j,l}\rho_{j,l}^s\mathcal{Z}_s(\theta_{j-l})&=-\sum_{l=j+1}^m(-1)^l\beta_{j,l}\rho_{j,l}^s\mathcal{Z}_s(\theta_{j-l})\\
&=-\sum_{l=1}^{m-j}(-1)^{j+l}\beta_{j,j+l}\rho_{j,j+l}^s\mathcal{Z}_s(\theta_{-l}).
\end{align*}
Substituting this expression in formula (\ref{npf}), using that $m\wedge j=j$ and the hypothesis that $\kappa_l =1$, $l=0,\ldots,m_1-1,$ and  $\tau_l=1$, $l=0,\ldots,m_2-1$, we get
\begin{align*}
\langle q_j,x^i\rangle&=\sum_{s=1}^{m_1}\gamma_s^ic_{j,s}\sum_{p=s-1}^{[m_1\wedge(m_2+s)]-1}\frac{\binom{m_2}{p-s+1}}{(-2)^{p+1}}
\left(\sum_{l=0}^{j}\beta_{j,l}(-1)^l\rho_{j,l}^s\mathcal{R}_{g_{p+1}}(\theta_{j-l})\right)\\
&\qquad+\sum_{s=m_1+1}^m\gamma_s^id_{j,s}\sum_{p=s-m_1-1}^{[m_2\wedge s]-1}\frac{\binom{m_1}{s-p-1}}{(-2)^{p+1}}\left(\sum_{l=0}^{j}\beta_{j,l}(-1)^l\mathcal{S}_{h_{p+1}}(\theta_{j-l})\right)\\
&=-\sum_{s=1}^{m_1}\gamma_s^ic_{j,s}\sum_{p=s-1}^{[m_1\wedge(m_2+s)]-1}\frac{\binom{m_2}{p-s+1}}{(-2)^{p+1}}\left(
\sum_{l=1}^{m-j}(-1)^{j+l}\beta_{j,j+l}\rho_{j,j+l}^s\mathcal{R}_{g_{p+1}}(\theta_{-l})\right)\\
&\qquad-\sum_{s=m_1+1}^m\gamma_s^id_{j,s}\sum_{p=s-m_1-1}^{[m_2\wedge s]-1}\frac{\binom{m_1}{s-p-1}}{(-2)^{p+1}}\left(\sum_{l=1}^{m-j}(-1)^{j+l}\beta_{j,j+l}\rho_{j,j+l}^s\mathcal{S}_{h_{p+1}}(\theta_{-l})\right)\\
&=-\sum_{l=1}^{m-j}(-1)^{j+l}\beta_{j,j+l}\left(\sum_{s=1}^{m_1}\gamma_s^i\rho_{j,j+l}^s c_{j,s}\sum_{p=s-1}^{[m_1\wedge(m_2+s)]-1}\frac{\binom{m_2}{p-s+1}}{(-2)^{p+1}}\mathcal{R}_{g_{p+1}}(\theta_{-l})\right.\\
&\qquad\qquad+\left.\sum_{s=m_1+1}^m\gamma_s^id_{j,s}\sum_{p=s-m_1-1}^{[m_2\wedge s]-1}\frac{\binom{m_1}{s-p-1}}{(-2)^{p+1}}\mathcal{S}_{h_{p+1}}(\theta_{-l})\right)\\
&=\frac{2^{\alpha+\beta}\Gamma(\beta+1)}{\Gamma(\alpha+\beta+1)}\sum_{l=1}^{m-j}(-1)^{l+1}\beta_{j,j+l}\times\\
&\quad\times \left[\frac{(-1)^l\Gamma(\alpha-l+1)}{\Gamma(\beta-l+1)}\sum_{s=1}^{m_1}\gamma_{s}^i\sum_{p=s-1}^{[m_1\wedge(m_2+s)]-1}\frac{\binom{m_2}{p-s+1}}{(-2)^{p-s+1}}
\mathcal{R}_{g_{p+1}}(\theta_{-l})\right.\\
&\qquad\qquad\left.+\sum_{s=m_1+1}^{m}\gamma_{s}^i\sum_{p=s-m_1-1}^{[m_2\wedge s]-1}\frac{\binom{m_1}{s-p-1}}{(-2)^{p-s+1}}\mathcal{S}_{h_{p+1}}(\theta_{-l})\right].
\end{align*}
In the last step we have used that $\rho_{n,n+j}^sc_{n,s}=\frac{(-1)^j\Gamma(\alpha-j+1)}{\Gamma(\beta-j+1)}d_{n,s}$. Now write
\begin{align}\nonumber
\Xi(i,l)&=\frac{(-1)^l\Gamma(\alpha-l+1)}{\Gamma(\beta-l+1)}\sum_{s=1}^{m_1}\gamma_{s}^i\sum_{p=s-1}^{[m_1\wedge(m_2+s)]-1}\frac{\binom{m_2}{p-s+1}}{(-2)^{p-s+1}}
\mathcal{R}_{g_{p+1}}(\theta_{-l})\\\label{emc0}
&\qquad\qquad+\sum_{s=m_1+1}^{m}\gamma_{s}^i\sum_{p=s-m_1-1}^{[m_2\wedge s]-1}\frac{\binom{m_1}{s-p-1}}{(-2)^{p-s+1}}\mathcal{S}_{h_{p+1}}(\theta_{-l}).
\end{align}
so that
\begin{equation}\label{deta0}
\langle q_j,x^i\rangle=\frac{2^{\alpha+\beta}\Gamma(\beta+1)}{\Gamma(\alpha+\beta+1)}\sum_{l=1}^{m-j}(-1)^{l+1}\beta_{j,j+l}\Xi(i,l).
\end{equation}
Let us write \eqref{emc0} in a more convenient way. Call $\delta_{i,l}$ and $\eta_{i,l}$ the following expressions
\begin{align*}
\delta_{i,l}=&\sum_{s=1}^{m_1}\gamma_{s}^i\sum_{p=s-1}^{[m_1\wedge(m_2+s)]-1}\frac{\binom{m_2}{p-s+1}}{(-2)^{p-s+1}}
\mathcal{R}_{g_{p+1}}(\theta_{-l}),\\
\eta_{i,l}=&\sum_{s=m_1+1}^{m}\gamma_{s}^i\sum_{p=s-m_1-1}^{[m_2\wedge s]-1}\frac{\binom{m_1}{s-p-1}}{(-2)^{p-s+1}}\mathcal{S}_{h_{p+1}}(\theta_{-l}),
\end{align*}
so that $\Xi(i,l)=\frac{(-1)^l\Gamma(\alpha-l+1)}{\Gamma(\beta-l+1)}\delta_{i,l}+\eta_{i,l}$. $\delta_{i,l}$ and $\eta_{i,l}$ can be written in matrix form as
$$
\delta_{i,l}=
\begin{pmatrix}
\gamma_1^{i},\cdots,\gamma_{m_1}^{i}
\end{pmatrix}\begin{pmatrix}
\eta_{11}&\cdots&\eta_{1,m_1}\\
&\ddots&\vdots\\
&&\eta_{m_1,m_1}
\end{pmatrix}\begin{pmatrix}
\mathcal{R}_{g_{1}}(\theta_{-l})\\\vdots\\\mathcal{R}_{g_{m_1}}(\theta_{-l})
\end{pmatrix}
$$
and
$$
\eta_{i,l}=
(-2)^{m_1}\begin{pmatrix}
\gamma_{m_1+1}^{i},\cdots,\gamma_{m}^{i}
\end{pmatrix}\begin{pmatrix}
\eta_{m_1+1,m_1+1}&\cdots&\eta_{m_1+1,m}\\
&\ddots&\vdots\\
&&\eta_{m,m}
\end{pmatrix}\begin{pmatrix}
\mathcal{S}_{h_{1}}(\theta_{-l})\\\vdots\\\mathcal{S}_{h_{m_2}}(\theta_{-l})
\end{pmatrix},
$$
where
$$
\eta_{i,j}=(-2) ^{i-j}\binom{m_2}{j-i},\quad i,j=1,\ldots,m_1,
$$
and
$$
\eta_{m_1+i,m_1+j}=(-2) ^{i-j}\binom{m_1}{j-i},\quad i,j=1,\ldots,m_2.
$$
Using Lemma \ref{nwl} we have that
$$
\begin{pmatrix}
\gamma_1^{i},\cdots,\gamma_{m_1}^{i}
\end{pmatrix}\begin{pmatrix}
\eta_{11}&\cdots&\eta_{1,m_1}\\
&\ddots&\vdots\\
&&\eta_{m_1,m_1}
\end{pmatrix}=\frac{(-1)^i}{2^{m_2}}\begin{pmatrix}
\displaystyle\binom{i}{0},\displaystyle-\binom{i}{1},\cdots,\displaystyle(-1)^{m_1-1}\binom{i}{m_1-1}
\end{pmatrix},
$$
and
\begin{align*}
(-2)^{m_1}&\begin{pmatrix}
\gamma_{m_1+1}^{i},\cdots,\gamma_{m}^{i}
\end{pmatrix}\begin{pmatrix}
\eta_{m_1+1,m_1+1}&\cdots&\eta_{m_1+1,m}\\
&\ddots&\vdots\\
&&\eta_{m,m}
\end{pmatrix}\\
&\qquad\qquad=(-1)^{m_1}\begin{pmatrix}
\displaystyle\binom{i}{0},\displaystyle-\binom{i}{1},\cdots,\displaystyle(-1)^{m_2-1}\binom{i}{m_2-1}
\end{pmatrix}.
\end{align*}
Therefore we can rewrite (\ref{emc0}) in the form
\begin{align}\nonumber
\Xi(i,l)&=\frac{(-1)^{i+l}}{2^{m_2}}\frac{\Gamma(\alpha-l+1)}{\Gamma(\beta-l+1)}\sum_{s=0}^{i\wedge(m_1-1)}(-1)^s\binom{i}{s}\mathcal{R}_{g_{s+1}}(\theta_{-l})\\\label{emc}
&\qquad\qquad+(-1)^{m_1}\sum_{s=0}^{i\wedge(m_2-1)}(-1)^s\binom{i}{s}\mathcal{S}_{h_{s+1}}(\theta_{-l}).
\end{align}
Write now $B$ for the $m\times m$ matrix whose entries are given by
$$
B_{i,j}=\Xi(i,m-j) \quad i,j=0,\ldots, m-1.
$$
Since $\alpha-\max H\neq 0, -1, -2, \ldots,$ and $\beta-\max G\neq 0,-1, -2, \ldots,$ the identity (\ref{betajm}) shows that
$\beta_{i,m}\not =0$. Combining columns, it is easy to see from (\ref{deta0}) that
\begin{equation}\label{deta}
\det A=\frac{(-1)^{\binom{m+1}{2}+m}2^{m(\alpha+\beta)}\Gamma(\beta+1)^m}{\Gamma(\alpha+\beta+1)^m}\left( \prod_{j=0}^{m-1}\beta_{j,m}\right) \det B.
\end{equation}
Using (\ref{emc}), it follows easily that the matrix $B$ is in turn the product of the matrices $C$ and $D$ defined as follows
\begin{align*}
C_{i,s}&=\begin{cases}\displaystyle\frac{(-1)^{i+s}}{2^{m_2}}\binom{i}{s},& s=0,\ldots, m_1-1,\\
\displaystyle(-1)^{m_1+s}\binom{i}{s-m_1},& s=m_1,\ldots, m-1,\end{cases} \quad i=0,\ldots , m-1,\\
D_{s,j}&=\begin{cases}\displaystyle\frac{(-1)^{m-j}\Gamma(\alpha-m+j+1)}{\Gamma(\beta-m+j+1)}R_{g_{s+1}}(\theta_{j-m}),& s=0,\ldots, m_1-1,\\
S_{h_{s+1-m_1}}(\theta_{j-m}),& s=m_1,\ldots, m-1,\end{cases} \quad j=0,\ldots,m-1.
\end{align*}
On the one hand, it is not difficult to prove that $\det C=(-1)^{\binom{m_2}{2}}$. The determinant of the matrix $D$ can be computed, using \eqref{defno}, \eqref{casdet}, \eqref{def1pi} and \eqref{def1qi} for $n=0$ to get
\begin{equation}\label{deta1}
\det D=\frac{\mathfrak p(0)\mathfrak q(0)\Gamma(\alpha-m+1)^{m_1}}{(-1)^{m_1m+\binom{m}{2}}\Gamma(\beta)^{m_1}}\Lambda_{G,H}(0),
\end{equation}
where the polynomials $\mathfrak p$ and $\mathfrak q$ are defined by (\ref{def1pi}) and (\ref{def1qi}), respectively. Combining (\ref{deta}) and (\ref{deta1}), and using (\ref{betajm}), we finally get
\begin{align}\nonumber
\det A&=\frac{\det (C)2^{m(\alpha+\beta)}\Gamma(\beta+1)^m}{\Gamma(\alpha+\beta+1)^m}\left( \prod_{j=0}^{m-1}\beta_{j,m}\right)\frac{\mathfrak p(0)\mathfrak q(0)\Gamma(\alpha-m+1)^{m_1}}{(-1)^{m_1m}\Gamma(\beta)^{m_1}}\Lambda_{G,H}(0)\\\label{spm1}
&=\frac{\mathfrak p(m)\mathfrak q(m)2^{m(\alpha+\beta)}\Gamma(\beta+1)^m\Gamma(\alpha+1)^{m_1}}{(-1)^{\binom{m_2}{2}}\Gamma(\alpha+\beta+1)^m\Gamma(\beta+m)^{m_1}}\left( \prod_{j=0}^{m}\Lambda_{G,H}(j)\right),
\end{align}
from where it follows that $\det A\not=0$, since we are assuming $\Lambda_{G,H}(n)\neq0, n\geq0$ (see \eqref{assum}) and  $\alpha-\max H\neq 0, -1, -2, \ldots,$ and $\beta-\max G\neq 0,-1, -2, \ldots$.
\end{proof}

\bigskip

The case when $\alpha=1,\ldots, \max H$ or/and $\beta=1,\ldots, \max G$
is specially interesting because it includes the Krall-Jacobi polynomials orthogonal with respect to the measures (\ref{kjm1}), (\ref{kjm2}) or (\ref{kjm3}). We only work out here the case when $\alpha=m_2,\ldots, \max H$ and $\beta=m_1,\ldots, \max G$. The other cases can be studied in a similar way.

We next show that we have to change the bilinear forms (\ref{inner2}) and (\ref{inner3}) by transforming a portion of the integral into a discrete Sobolev inner product. First of all, notice that for $g_{l+1}\in G$, $h_{l+1}\in H$, $g_{l+1}\ge \beta$ and $h_{l+1}\ge \alpha$, the rational functions $U_l$ (\ref{Uig}) and $V_l$ (\ref{Vig}) reduce to
\begin{align}\label{ecc1}
U_l(x)&=\phi_l(x)+\frac{\kappa_l}{\Gamma(\beta)}\sum_{s=0}^{\beta -1}
\frac{(\beta-s)_s2^ss!\nu_s^{g_{l+1}}}{(1+x)^{s+1}},\\\nonumber
V_l(x)&=\psi_l(x)+
\frac{\tau_l}{\Gamma(\alpha)}\sum_{s=0}^{\alpha-1}
\frac{(\alpha-s)_s2^ss!\omega_s^{h_{l+1}}}{(1-x)^{s+1}}.
\end{align}
Since $\alpha=m_2,\ldots, \max H$ and $\beta=m_1,\ldots, \max G$, there exist indices $k_1$ and $k_2$, $1\le k_1\le m_1$ and $1\le k_2\le m_2$, such that $g_{l}\ge \beta$ for $l>k_1$ and $h_{l}\ge \alpha$ for $l>k_2$.

Take now $\tilde \alpha, \tilde \beta\not \in \ZZ$ and $p,q\in \PP $, and denote $\langle p,q\rangle _{\tilde 2}$
the bilinear form (\ref{inner2}) defined by $\tilde \alpha$ and $\tilde \beta$.
Using the definition of $U_l$ (\ref{Uig}), we can see that $\langle p,q\rangle _{\tilde 2}$ is formed by certain summands depending on $l$, $l=0,\ldots, m_1-1$, of the form
\begin{equation}\label{ssa}
\int_{-1}^1 p(x)\phi_l(x)d\mu_{\tilde \alpha,\tilde \beta},
\end{equation}
and certain summands depending on $l$ and $s$, $l=0,\ldots, m_1-1$, $s=0,\ldots g_{l+1}$, of the form
\begin{equation}\label{ssb}
\frac{\kappa_l(\tilde \beta-s)_s2^ss!\nu_s^{g_{l+1}}q^{(l)}(-1)}{2^{m_2}l!\Gamma(\tilde \beta)}\int_{-1}^1
\frac{p(x)}{(1+x)^{s+1}}d\mu_{\tilde \alpha,\tilde \beta}.
\end{equation}
Notice that when $\tilde \alpha$ and $\tilde \beta $ tend to $\alpha$ and $\beta$, respectively, each of the summands (\ref{ssa}) has a limit (since $\beta=m_1,\ldots, \max G$). This also happens for the summands (\ref{ssb}) when $l=0,\ldots, k_1-1$ (and hence $g_{l+1}\le \beta-1$), or $l=k_1, \ldots , m_1-1$ and $s=0,\ldots, \beta-1$. Moreover, taking into account (\ref{Uig}) and (\ref{ecc1}), the sum formed by these summands converges to $\langle p,q\rangle _2$.

Consider now one of the summands (\ref{ssb})  corresponding to $l=k_1, \ldots , m_1-1$ (and hence $g_{l+1}\ge\beta$) and $s=\beta, \ldots, g_{l+1}$.
Since
$$
\int_{-1}^1
\frac{p(x)}{(1+x)^{s+1}}d\mu_{\tilde\alpha,\tilde \beta}=\int_{-1}^1
p(x)d\mu_{\tilde \alpha,\tilde \beta-s-1},
$$
we can not take limit when $\tilde \beta$ tends to $\beta$ because $\beta-s-1\le-1$ and we will have divergent integrals. To avoid this problem, we split up $p$ in the form
$$
p(x)=p(x)-\sum_{j=0}^{g_{l+1}-\beta}\frac{p^{(j)}(-1)}{j!}(1+x)^j+\sum_{j=0}^{g_{l+1}-\beta}\frac{p^{(j)}(-1)}{j!}(1+x)^j=r(x)+\sum_{j=0}^{g_{l+1}-\beta}\frac{p^{(j)}(-1)}{j!}(1+x)^j.
$$
On the one hand, we have
$$
\int_{-1}^1r(x)d\mu_{\tilde \alpha,\tilde \beta-s-1}=\int_{-1}^1\frac{r(x)}{(1+x)^{g_{l+1}-\beta+1}}d\mu_{\tilde \alpha, g_{l+1}-s+\tilde\beta-\beta}.
$$
Since $r(x)/(1+x)^{g_{l+1}-\beta+1}$ is still a polynomial and $g_{l+1}-s\ge 0$, the above integral has a finite limit when $\tilde \beta$ tends to $\beta$. That integral is multiplied by the factor $(\tilde \beta-s)_s$ (see (\ref{ssb})), which goes to zero as $\tilde \beta$ tends to $\beta$ (since $s\ge \beta$).

On the other hand, for $j=0,\ldots, g-\beta,$ we have
$$
\int_{-1}^1(1+x)^jd\mu_{\tilde \alpha,\tilde \beta-s-1}=\int_{-1}^1d\mu_{\tilde \alpha,\tilde \beta+j-s-1}=2^{\tilde \alpha+\tilde \beta+j-s}
\frac{\Gamma(\tilde \alpha+1)\Gamma(\tilde \beta-s+j)}{\Gamma(\tilde \alpha+\tilde \beta-s+j+1)}.
$$
Multiplying $\Gamma(\tilde \beta-s+j)$ by $(\tilde \beta-s)_s$, we get
$$
(\tilde \beta-s)_s\Gamma(\tilde \beta-s+j)=(\tilde \beta-s)_j\Gamma (\tilde \beta),
$$
which tends to $(\beta-s)_j\Gamma (\beta)$ as $\tilde \beta$ tends to $\beta$.

Hence, we have proved that
$$
\lim_{\tilde\alpha\to\alpha,\tilde\beta\to\beta}\int_{-1}^1
\frac{(\tilde \beta-s)_s p(x)}{(1+x)^{s+1}}d\mu_{\tilde\alpha,\tilde \beta}=\Gamma(\alpha+1)\Gamma (\beta)\sum_{j=0}^{g-\beta}\frac{2^{\alpha+\beta+j-s}p^{(j)}(-1)}{j!}
\frac{(\beta-s)_j}{\Gamma(\alpha+\beta-s+j+1)}.
$$
This gives the discrete Sobolev bilinear form
\begin{equation*}\label{inner2S}
\langle p,q\rangle_{2S}=2^{\alpha+\beta}\Gamma(\alpha+1)
\sum_{l=0}^{m_1-1}\kappa_l\frac{q^{(l)}(-1)}{2^{m_2}l!}\sum_{j=0}^{g_{l+1}-\beta}
\frac{2^jp^{(j)}(-1)}{j!}\sum_{s=\beta +j}^{g_{l+1}}
\frac{(\beta-s)_js!}{\Gamma(\alpha+\beta-s+j+1)}\nu_s^{g_{l+1}}.
\end{equation*}
Proceeding in the same way with the bilinear form (\ref{inner3}), we get the discrete Sobolev bilinear form
\begin{equation*}\label{inner3S}
\langle p,q\rangle_{3S}=2^{\alpha+\beta}\Gamma(\beta+1)
\sum_{l=0}^{m_2-1}\tau_l\frac{q^{(l)}(1)}{(-1)^{m_1+l}l!}\sum_{j=0}^{h_{l+1}-\alpha}
\frac{2^jp^{(j)}(1)}{(-1)^jj!}\sum_{s=\alpha +j}^{h_{l+1}}
\frac{(\alpha-s)_js!}{\Gamma(\alpha+\beta-s+j+1)}\omega_s^{h_{l+1}}.
\end{equation*}
We then define the bilinear form
\begin{equation}\label{innerzx}
\langle p,q\rangle_S=\langle p,q\rangle_1+\langle p,q\rangle_2+\langle p,q\rangle_{2S}+\langle p,q\rangle_3+\langle p,q\rangle_{3S}.
\end{equation}

It turns out that Lemma \ref{lemwenok} is still true for this inner product.

\begin{lemma}\label{lmn1}
Let $k,n\geq0$, $j=0,1,\ldots,n$ and $n-j\ge k$. For $i=1,\ldots,m_1,$ we have
$$
\langle (1+x)^kJ_{n-j}^{\alpha,\beta},b_i\rangle_S=\frac{c_{n,i}\rho_{n,j}^i}{(-1)^j}\left[\sum_{l=i-1}^{[m_1\wedge (m_2+i)]-1}\frac{\kappa_l\binom{m_2}{l-i+1}}{(-2)^{l+1}} \sum_{s=k}^{g_{l+1}}2^k\nu_s^{g_{l+1}}(\beta-s)_k(s-k+1)_ku_{s-k}^{\alpha}(n-j)\right],
$$
where $c_{n,i}=(-1)^{m+i}2^{\alpha+\beta+i}
\Gamma(\beta+1)\Gamma(n+\alpha-m+1)/(\Gamma(\alpha+\beta+1)\Gamma(n+\beta))$ and $\rho_{x,j}^i$ is defined by \eqref{defno}. Similarly, for $i=m_1+1,\ldots,m,$ we have
$$
\langle (1-x)^kJ_{n-j}^{\alpha,\beta},b_i\rangle_S=\frac{d_{n,i}}{(-1)^j}\left[\sum_{l=i-m_1-1}^{[m_2\wedge i]-1}\frac{\tau_l\binom{m_1}{i-l-1}}{(-2)^{l+1}}\sum_{s=k}^{h_{l+1}}2^k\omega_s^{h_{l+1}}(\alpha-s)_k(s-k+1)_k
u_{s-k}^{\alpha}(n-j)\right],
$$
where $d_{n,i}=(-1)^{n+i}2^{\alpha+\beta+i}\Gamma(\beta+1)/\Gamma(\alpha+\beta+1)$.
\end{lemma}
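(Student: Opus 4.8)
The plan is to deduce Lemma~\ref{lmn1} from Lemma~\ref{lemwenok} by a continuity argument in the parameters, exploiting that the bilinear form $\langle\cdot,\cdot\rangle_S$ of \eqref{innerzx} was built precisely as the limit, as $\tilde\alpha\to\alpha$ and $\tilde\beta\to\beta$, of the form $\langle\cdot,\cdot\rangle$ of \eqref{inner} with \emph{non-integer} parameters $\tilde\alpha,\tilde\beta$. No new integral computation is needed: Lemma~\ref{lemwenok} is invoked as a black box for the non-integer parameters, and then one passes to the limit.

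First I would fix $k,n,j,i$ as in the statement and keep $\alpha=m_2,\ldots,\max H$, $\beta=m_1,\ldots,\max G$ fixed, while letting $\tilde\alpha,\tilde\beta\notin\ZZ$ run near $(\alpha,\beta)$. For such $\tilde\alpha,\tilde\beta$ Lemma~\ref{lemwenok} applies verbatim and gives, for $i=1,\ldots,m_1$,
$$
\langle (1+x)^kJ_{n-j}^{\tilde\alpha,\tilde\beta},b_i\rangle
=\frac{c_{n,i}\,\rho_{n,j}^i}{(-1)^j}\Big[\sum_{l=i-1}^{[m_1\wedge(m_2+i)]-1}\frac{\kappa_l\binom{m_2}{l-i+1}}{(-2)^{l+1}}\sum_{s=k}^{g_{l+1}}2^k\nu_s^{g_{l+1}}(\tilde\beta-s)_k(s-k+1)_k u_{s-k}^{\tilde\alpha}(n-j)\Big],
$$
where the inner product, the Jacobi polynomial, the constants $c_{n,i},\rho_{n,j}^i$, and the coefficients $\nu_s^{g_{l+1}}$ of \eqref{defrs} are all formed with $\tilde\alpha,\tilde\beta$; the case $i=m_1+1,\ldots,m$ is the analogous identity with $\langle\cdot,\cdot\rangle_3$ and $V_l$.

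Next I would send $\tilde\alpha\to\alpha$, $\tilde\beta\to\beta$ and identify the limit of each side. For the left-hand side the discussion preceding the lemma shows $\langle p,q\rangle\to\langle p,q\rangle_S$ termwise for fixed polynomials $p,q$ (the parts $\langle\cdot,\cdot\rangle_{\tilde 2},\langle\cdot,\cdot\rangle_{\tilde 3}$ splitting into $\langle\cdot,\cdot\rangle_2+\langle\cdot,\cdot\rangle_{2S}$ and $\langle\cdot,\cdot\rangle_3+\langle\cdot,\cdot\rangle_{3S}$, and $\langle\cdot,\cdot\rangle_1$ converging since $\alpha-m_2\ge0$ and $\beta-m_1\ge0$); since moreover $J_{n-j}^{\tilde\alpha,\tilde\beta}\to J_{n-j}^{\alpha,\beta}$ coefficientwise (its normalization is analytic in the parameters here, as $(\beta+1)_n\ne0$) together with all derivatives at $\pm1$, the left-hand side tends to $\langle (1+x)^kJ_{n-j}^{\alpha,\beta},b_i\rangle_S$. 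For the right-hand side each factor is continuous at $(\alpha,\beta)$: cancelling common Gamma factors, the product $c_{n,i}\rho_{n,j}^i$ reduces to a ratio of Gamma functions whose arguments stay positive because $n-j\ge k\ge0$, while the Pochhammer symbols, the values $u_{s-k}^{\tilde\alpha}(n-j)$ of \eqref{basu}, and the $\nu_s^{g_{l+1}}$ are polynomial in $(\tilde\alpha,\tilde\beta)$ (the $u_s^{\alpha}$ being a monic basis of $\RR[\theta_x]$ for every $\alpha,\beta$). Hence the right-hand limit is the bracketed expression in the statement, and equating the two limits proves the lemma.

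The main obstacle is the convergence of the left-hand side, since there both the bilinear form and the integrand depend on $(\tilde\alpha,\tilde\beta)$, and the individual pieces $\int_{-1}^1(1+x)^kJ_{n-j}^{\tilde\alpha,\tilde\beta}(x)(1+x)^{-s-1}\,d\mu_{\tilde\alpha,\tilde\beta}$ with $s\ge\beta$ genuinely diverge at $\tilde\beta=\beta$. I would treat this exactly as in the Sobolev splitting carried out before the lemma, now applied to $p=(1+x)^kJ_{n-j}^{\tilde\alpha,\tilde\beta}$: the prefactor $(\tilde\beta-s)_s$ cancels the residual singularity, and only finitely many derivative values of $p$ at $-1$ survive, which converge because $p$ converges together with its derivatives. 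Once this joint limit is justified the rest is bookkeeping, for the inner sum $\sum_{s=k}^{g_{l+1}}$ on the right already runs over all $s$: the terms $s\ge\beta$ are precisely those that migrated from the integral part of $\langle\cdot,\cdot\rangle$ into $\langle\cdot,\cdot\rangle_{2S}$ (resp.\ $\langle\cdot,\cdot\rangle_{3S}$), so the formula of Lemma~\ref{lemwenok} survives unchanged for $\langle\cdot,\cdot\rangle_S$.
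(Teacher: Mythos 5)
Your proposal is correct, but it proves the lemma by a genuinely different route than the paper. The paper's proof is a direct computation: it writes $\langle\cdot,\cdot\rangle_S=\langle\cdot,\cdot\rangle+\langle\cdot,\cdot\rangle_{2S}$, observes that for integer $\beta$ the formula of Lemma \ref{lemwenok} applied to $\langle\cdot,\cdot\rangle$ truncates at $s=\beta-1$ (because $(\beta-s)_s=0$ for $s\ge\beta$, cf.\ \eqref{ecc1}), and then evaluates $\langle (1+x)^kJ_{n-j}^{\alpha,\beta},b_i\rangle_{2S}$ explicitly using the derivative formula \eqref{fff1} and the combinatorial identity \eqref{formulaca}, showing that this contribution supplies exactly the missing terms $s=\beta,\ldots,g_{l+1}$. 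You instead treat Lemma \ref{lemwenok} as a black box at non-integer parameters $(\tilde\alpha,\tilde\beta)$ and pass to the limit, which is legitimate because $\langle\cdot,\cdot\rangle_S$ was constructed as precisely that limit; your approach entirely avoids \eqref{fff1} and \eqref{formulaca} and explains conceptually why the formula ``survives unchanged''. What it costs you is the joint-continuity issue you correctly flag: the limit construction preceding the lemma is stated for \emph{fixed} $p,q$, whereas your $p=(1+x)^kJ_{n-j}^{\tilde\alpha,\tilde\beta}$ varies with the parameters, so the splitting $p=r+\sum_j p^{(j)}(-1)(1+x)^j/j!$ must be rerun with a varying $r$, checking that $\int r/(1+x)^{g_{l+1}-\beta+1}\,d\mu_{\tilde\alpha,g_{l+1}-s+\tilde\beta-\beta}$ stays bounded while $(\tilde\beta-s)_s\to0$ and that the surviving derivative values converge; your sketch of this is right, as is your observation that the product $c_{n,i}\rho_{n,j}^i$ (whose individual factors are singular at integer parameters) is continuous after cancelling $\Gamma(n+\alpha-m+1)$ and $\Gamma(n+\beta)$, and that the $\nu_s^{g}$ depend polynomially on $\tilde\alpha+\tilde\beta$ since the $u_s^{\alpha}$ form a monic triangular basis of $\RR[\theta_x]$. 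The paper's computation is self-contained and sidesteps these uniformity checks at the price of the identity \eqref{formulaca}; either route is acceptable.
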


\begin{proof}
Call $v=n-j$. For $i=1,\ldots,m_1,$ we have that $\langle (1+x)^kJ_{v}^{\alpha,\beta},b_i\rangle_3=\langle (1+x)^kJ_{v}^{\alpha,\beta},b_i\rangle_{3S}=0$. Therefore
$$
\langle (1+x)^kJ_{v}^{\alpha,\beta},b_i\rangle_S=\langle (1+x)^kJ_{v}^{\alpha,\beta},b_i\rangle+\langle (1+x)^kJ_{v}^{\alpha,\beta},b_i\rangle_{2S},
$$
where the inner product $\langle\cdot,\cdot\rangle$ is defined by \eqref{inner}. Following the same steps as in the proof of Lemma \ref{lemwenok} we have that
\begin{align*}
\langle& (1+x)^kJ_{v}^{\alpha,\beta},b_i\rangle=\frac{c_{v+j,i}\rho_{v+j,j}^i}{(-1)^j}\left[\sum_{l=i-1}^{[m_1\wedge (m_2+i)]-1}\frac{\kappa_l\binom{m_2}{l-i+1}}{(-2)^{l+1}} \sum_{s=k}^{\beta-1}2^k\nu_s^{g_{l+1}}(\beta-s)_k(s-k+1)_ku_{s-k}^{\alpha}(v)\right]\\
&=\frac{2^{\alpha+\beta}\Gamma(\beta+1)\Gamma(v+\alpha+1)}{\Gamma(\alpha+\beta+1)\Gamma(v+\beta+1)}\left[\sum_{l=i-1}^{[m_1\wedge (m_2+i)]-1}\frac{\kappa_l\binom{m_2}{l-i+1}}{(-2)^{l-i+1}} \sum_{s=k}^{\beta-1}2^k\nu_s^{g_{l+1}}(\beta-s)_k(s-k+1)_ku_{s-k}^{\alpha}(v)\right].
\end{align*}
On the other hand we have, using \eqref{propb1}, \eqref{fff1}, \eqref{basu}, \eqref{gammas} and \eqref{formulaca} that
\begin{align*}
\langle& (1+x)^kJ_{n-j}^{\alpha,\beta},b_i\rangle_{2S}=2^{\alpha+\beta}\Gamma(\alpha+1)\sum_{l=i-1}^{[m_1\wedge (m_2+i)]-1}\frac{\kappa_l\binom{m_2}{l-i+1}}{(-2)^{l-i+1}}\sum_{j=0}^{g_{l+1}-\beta}\frac{(-1)^{j+k}j!}{2^{-k}\binom{\alpha+\beta}{\beta}}\times\\
&\times\binom{v+\alpha+\beta}{\alpha}\binom{v+\beta}{v-j+k}\binom{v+\alpha+\beta+j-k}{j-k}\sum_{s=\beta+j}^{g_{l+1}}\frac{(\beta-s)_js!}{\Gamma(\alpha+\beta-s+j+1)}\nu_s^{g_{l+1}}\\
&=\frac{2^{\alpha+\beta}\Gamma(\beta+1)\Gamma(v+\alpha+1)}{\Gamma(\alpha+\beta+1)\Gamma(v+\beta+1)}\times\\
&\times\left[\sum_{l=i-1}^{[m_1\wedge (m_2+i)]-1}\frac{\kappa_l\binom{m_2}{l-i+1}}{(-2)^{l-i+1}}\sum_{j=0}^{g_{l+1}-\beta}\frac{(-1)^{j+k}2^k\Gamma(\alpha+1)\Gamma(v+\beta+1)\Gamma(v+\alpha+\beta+j-k+1)}{\Gamma(u+\alpha+1)\Gamma(u-j+k+1)\Gamma(\beta+j-k+1)\Gamma(j-k+1)}\times\right.\\
&\qquad\times\left.\sum_{s=\beta+j}^{g_{l+1}}\frac{(\beta-s)_js!}{\Gamma(\alpha+\beta-s+j+1)}\nu_s^{g_{l+1}}\right]\\
&=\frac{2^{\alpha+\beta}\Gamma(\beta+1)\Gamma(v+\alpha+1)}{\Gamma(\alpha+\beta+1)\Gamma(v+\beta+1)}\times\\
&\times\left[\sum_{l=i-1}^{[m_1\wedge (m_2+i)]-1}\frac{\kappa_l\binom{m_2}{l-i+1}}{(-2)^{l-i+1}}\sum_{s=\beta}^{g_{l+1}}2^k\nu_s^{g_{l+1}}\times\right.\\
&\times\left.\sum_{j=0}^{s-\beta}\frac{(-1)^{j+k}\Gamma(\alpha+1)\Gamma(v+\beta+1)\Gamma(v+\alpha+\beta+j-k+1)(\beta-s)_js!}{\Gamma(u+\alpha+1)\Gamma(u-j+k+1)\Gamma(\beta+j-k+1)\Gamma(j-k+1)\Gamma(\alpha+\beta-s+j+1)}\right]\\
&=\frac{2^{\alpha+\beta}\Gamma(\beta+1)\Gamma(v+\alpha+1)}{\Gamma(\alpha+\beta+1)\Gamma(v+\beta+1)}\times\\
&\times\left[\sum_{l=i-1}^{[m_1\wedge (m_2+i)]-1}\frac{\kappa_l\binom{m_2}{l-i+1}}{(-2)^{l-i+1}}\sum_{s=\beta}^{g_{l+1}}2^k\nu_s^{g_{l+1}}\frac{(\beta-s)_ k(s-k+1)_k\Gamma(v+s-k+1)\Gamma(v+\alpha+\beta+1)}{\Gamma(v+1)\Gamma(v+\alpha+\beta-s+k+1)}\right]\\
&=\frac{2^{\alpha+\beta}\Gamma(\beta+1)\Gamma(v+\alpha+1)}{\Gamma(\alpha+\beta+1)\Gamma(v+\beta+1)}\left[\sum_{l=i-1}^{[m_1\wedge (m_2+i)]-1}\frac{\kappa_l\binom{m_2}{l-i+1}}{(-2)^{l-i+1}} \sum_{s=\beta}^{g_{l+1}}2^k\nu_s^{g_{l+1}}(\beta-s)_k(s-k+1)_ku_{s-k}^{\alpha}(v)\right].
\end{align*}
In the third step we have interchanged the order of summation and in the fourth step we have used \eqref{formulaca} written in terms of Gamma functions. Therefore adding the previous expression to $\langle (1+x)^kJ_{v}^{\alpha,\beta},b_i\rangle$ we get $\langle (1+x)^kJ_{v}^{\alpha,\beta},b_i\rangle_S$. The second identity can be proved in a similar way.

\end{proof}

\begin{theorem}\label{tho2j}
Let $\alpha$ and $\beta$ be positive integers satisfying $\alpha=m_2,\ldots, \max H$ and $\beta=m_1,\ldots, \max G$
and assume that conditions \eqref{assum} hold.
Then, for $n\geq m$, the polynomials $(q_n)_n$ defined by \eqref{iquss} satisfy the following orthogonality properties with respect to the bilinear from \eqref{innerzx}:
\begin{align*}
\langle q_n,q_i\rangle_S&=0,\quad i=0,1,\ldots,n-1,\\
\langle q_n,q_n\rangle_S&\neq0.
\end{align*}
Moreover,  if we assume that $\kappa_l =1$, $l=0,\ldots,m_1-1$ and  $\tau_l=1$, $l=0,\ldots,m_2-1$, then the matrix $A$ with entries $A_{i,j}=\langle q_j,x^i\rangle_S$, $i,j=0,\ldots, m-1$, is nonsingular.
\end{theorem}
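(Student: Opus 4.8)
The plan is to rerun the proofs of Theorem~\ref{thoj} and Lemma~\ref{cdsm} essentially verbatim, replacing the bilinear form $\langle\cdot,\cdot\rangle$ by $\langle\cdot,\cdot\rangle_S$ throughout. This is justified precisely because Lemma~\ref{lmn1} shows that the expansion coefficients $\langle(1+x)^kJ_{n-j}^{\alpha,\beta},b_i\rangle_S$ and $\langle(1-x)^kJ_{n-j}^{\alpha,\beta},b_i\rangle_S$ are given by \emph{exactly} the same closed formulas as their non-Sobolev analogues in Lemma~\ref{lemwenok}; in particular, the $k=0$ specialization reproduces word for word the two identities of Remark~\ref{remk0}. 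Consequently the only genuinely new points are to verify that the final non-vanishing assertions survive in the integer regime $\alpha=m_2,\ldots,\max H$, $\beta=m_1,\ldots,\max G$.

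For the orthogonality relations I would proceed as in Theorem~\ref{thoj}: expand $q_n$ by \eqref{qqbeta} and $x^i$ by \eqref{qnibi}, and apply the $k=0$ case of Lemma~\ref{lmn1}. This writes $\langle q_n,x^i\rangle_S$ as three addends, the first two carrying the inner sums $\sum_j\beta_{n,j}(-1)^j\rho_{n,j}^s\mathcal{R}_{g_{l+1}}(\theta_{n-j})$ and $\sum_j\beta_{n,j}(-1)^j\mathcal{S}_{h_{l+1}}(\theta_{n-j})$, and the third being a genuine Jacobi integral. For $n\ge m$ we have $m\wedge n=m$, so those first two sums are full sums from $j=0$ to $m$ and therefore vanish by the trivial identity \eqref{vanisz}. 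When $m\le i\le n-1$ the third addend vanishes by orthogonality of the Jacobi polynomials (the powers of $x$ that occur are at most $i-m<n-j$), while for $0\le i\le m-1$ the third addend is simply absent; this gives $\langle q_n,q_i\rangle_S=0$ for $i<n$. For $i=n$ only the top term of the Jacobi integral survives, and $\langle q_n,q_n\rangle_S\neq0$ follows from Jacobi orthogonality together with $\beta_{n,m}\neq0$; the latter is read off from \eqref{betajm}, whose factor $n+\alpha-m+1\ge m_2+1>0$ is nonzero, whose factor $\Lambda_{G,H}(n+1)$ is nonzero by \eqref{assum}, and whose ratio $\mathfrak p(n+1)\mathfrak q(n+1)/(\mathfrak p(n)\mathfrak q(n))$ is finite and nonzero since the roots of $\mathfrak p$ and $\mathfrak q$ all lie below $m$.

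For the non-singularity of $A$ (under $\kappa_l=\tau_l=1$) I would reproduce the computation of Lemma~\ref{cdsm}: starting from the $\langle\cdot,\cdot\rangle_S$ version of \eqref{npf}, use \eqref{vanisz} to re-index $\sum_{l=0}^{j}$ as $-\sum_{l=1}^{m-j}$, factor the result through the matrices $C$ and $D$ built with the help of Lemma~\ref{nwl}, and arrive at the same closed expression \eqref{spm1} for $\det A$. It then suffices to observe that in this regime every Gamma factor in \eqref{spm1}, namely $\Gamma(\beta+1)$, $\Gamma(\alpha+1)$, $\Gamma(\alpha+\beta+1)$ and $\Gamma(\beta+m)$, is evaluated at a positive integer and is hence finite and nonzero, that $\mathfrak p(m)\mathfrak q(m)\neq0$, and that $\prod_{j=0}^{m}\Lambda_{G,H}(j)\neq0$ by \eqref{assum}; thus $\det A\neq0$.

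The main obstacle is not the combinatorics: Lemma~\ref{lmn1} has already absorbed the one delicate identity \eqref{formulaca} needed to show that the discrete Sobolev corrections $\langle\cdot,\cdot\rangle_{2S}$ and $\langle\cdot,\cdot\rangle_{3S}$ restore exactly the summands $s\ge\beta$ (resp.\ $s\ge\alpha$) that would otherwise be missing from the formulas of Lemma~\ref{lemwenok}. The real care lies in the integer-degenerate bookkeeping: several intermediate quantities used in Lemma~\ref{cdsm} (the ratios $\Gamma(\alpha-l+1)/\Gamma(\beta-l+1)$ inside $\Xi(i,l)$, the factors $\rho_{n,j}^i$, $c_{n,i}$, $d_{n,i}$, and the individual $\beta_{j,m}$ for $j<m$) involve Gamma functions at non-positive integer arguments when $\alpha,\beta$ lie in the stated range, so they may be indeterminate. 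One must interpret them through the conventions of Remark~\ref{qsc} (in particular $1/\Gamma(-n)=0$) and check that the indeterminacies telescope and cancel, so that the chain of manipulations still terminates at the finite, manifestly nonzero expression \eqref{spm1}. Equivalently, and perhaps more transparently, one can regard $\langle\cdot,\cdot\rangle_S$ as the limit of the non-degenerate forms $\langle\cdot,\cdot\rangle_{\tilde\alpha,\tilde\beta}$ as $\tilde\alpha\to\alpha$ and $\tilde\beta\to\beta$ (the very limit by which it was constructed), and pass to the limit in the conclusions of Theorem~\ref{thoj} and Lemma~\ref{cdsm}, using the continuity of both $q_n$ and the right-hand side of \eqref{spm1} in $(\alpha,\beta)$.
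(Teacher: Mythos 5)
Your proposal is correct and matches the paper's own argument: the first part is proved exactly as in Theorem \ref{thoj} with Lemma \ref{lmn1} in place of Lemma \ref{lemwenok}, and for the nonsingularity of $A$ the paper uses precisely the limit argument you offer at the end (letting $\tilde\alpha\to\alpha$, $\tilde\beta\to\beta$ in Lemma \ref{cdsm} so that $\tilde A\to A$ and \eqref{spm1} persists by continuity, then checking $\mathfrak p(m)\mathfrak q(m)\neq0$ in the integer regime). Your cautionary remarks about the degenerate Gamma factors in a direct rerun of Lemma \ref{cdsm} are well taken, and the limit route is exactly how the paper sidesteps them.
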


\begin{proof}
The proof of the first part is similar to that of Theorem \ref{thoj}, but using now Lemma \ref{lmn1} instead of Lemma \ref{lemwenok}. The second part can be proved by passing to the limit in Lemma \ref{cdsm}. Indeed, take $\tilde \alpha, \tilde \beta\not \in \ZZ$ and write $\tilde A$ for the corresponding matrix associated to $\tilde \alpha, \tilde \beta$. We have that $\tilde A$ tends to $A$ as $\tilde \alpha, \tilde \beta$ tends to $\alpha, \beta$, respectively. According to (\ref{spm1}) we have
$$
\det A=\frac{\mathfrak p(m)\mathfrak q(m)2^{m(\alpha+\beta)}\Gamma(\beta+1)^m\Gamma(\alpha+1)^{m_1}}{(-1)^{\binom{m_2}{2}}\Gamma(\alpha+\beta+1)^m\Gamma(\beta+m)^{m_1}}\left( \prod_{j=0}^{m}\Lambda_{G,H}(j)\right),
$$
where the polynomials $\mathfrak p$ and $\mathfrak q$ are defined by (\ref{def1pi}) and (\ref{def1qi}), respectively.
From here it follows that $\det A\not=0$, since we are assuming $\Lambda_{G,H}(n)\neq0, n\geq0$ (see \eqref{assum}) and for $\alpha=m_2,\ldots, \max H$ and $\beta=m_1,\ldots, \max G$, we have $\mathfrak p(m)\mathfrak q(m)\not =0$.
\end{proof}

\section{Recurrence relations}\label{jrr}
In the next theorem we prove that the Jacobi-type polynomials (\ref{iquss}) are bispectral. We first consider the case $\alpha-\max H\neq 0, -1, -2, \ldots,$ and $\beta-\max G\neq 0,-1, -2, \ldots$.

\begin{theorem}\label{t6.9}
Let $\alpha$ and $\beta$ be real numbers with $\alpha-\max H\neq 0, -1, -2, \ldots,$ and $\beta-\max G\neq 0,-1, -2, \ldots$. Let $Q\in\mathbb{R}[x]$ be a polynomial of degree $s$ satisfying that
$(1+x)^{\max G }(1-x)^{\max H }$ divides $Q'$. Then the sequence $(q_n)_n$ in \eqref{iquss} satisfies the recurrence relation
\begin{equation}\label{frov}
Q(x)q_n(x)=\sum_{j=-s}^s\gamma_{n,j}q_{n+j}(x),\quad \gamma_{n,s},\gamma_{n,-s}\neq0.
\end{equation}
\end{theorem}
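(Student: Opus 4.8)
The plan is to expand the product $Q q_n$ in the basis $(q_k)_k$ and to prove that this expansion is banded, with the band running symmetrically from $q_{n-s}$ to $q_{n+s}$. Since $\deg q_k=k$, we may write $Q(x)q_n(x)=\sum_{k=0}^{n+s}\gamma_{n,k}q_k(x)$, so the content of the theorem is that $\gamma_{n,k}=0$ for $k\le n-s-1$ together with $\gamma_{n,n-s},\gamma_{n,n+s}\neq0$. Throughout I would fix $\kappa_l=1$, $l=0,\dots,m_1-1$, and $\tau_l=1$, $l=0,\dots,m_2-1$, so that both the orthogonality of Theorem~\ref{thoj} and the nonsingularity of the matrix $A$ in Lemma~\ref{cdsm} are available for the same bilinear form $\langle\cdot,\cdot\rangle$. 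The crucial intermediate claim, call it $(\dagger)$, is that $Q q_n$ is left-orthogonal to all low-degree polynomials, i.e. $\langle Q q_n,x^i\rangle=0$ for $i=0,1,\dots,n-s-1$ (for $n$ sufficiently large; small $n$ is dealt with separately as in the Laguerre case \cite{ddIlb}).

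Granting $(\dagger)$, the vanishing of the low coefficients follows by a triangular argument. For $0\le i\le n-s-1$ one has $0=\langle Q q_n,x^i\rangle=\sum_k\gamma_{n,k}\langle q_k,x^i\rangle$, and by Theorem~\ref{thoj} the pairing $\langle q_k,x^i\rangle$ vanishes whenever $k\ge m$ and $k>i$. For $i\le m-1$ this leaves $\sum_{k=0}^{m-1}\gamma_{n,k}\langle q_k,x^i\rangle=0$, a homogeneous system whose matrix is exactly the matrix $A$ of Lemma~\ref{cdsm}; since $\det A\neq0$ we obtain $\gamma_{n,0}=\cdots=\gamma_{n,m-1}=0$. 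Feeding this into the equations with $m\le i\le n-s-1$, which read $\sum_{k=0}^{i}\gamma_{n,k}\langle q_k,x^i\rangle=0$, and using that $\langle q_i,x^i\rangle=\langle q_i,q_i\rangle/\mathrm{lc}(q_i)\neq0$, an induction on $i$ yields $\gamma_{n,k}=0$ for all $k\le n-s-1$.

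The heart of the matter, and the step I expect to be the main obstacle, is $(\dagger)$. I would compute $\langle Q q_n,x^i\rangle$ exactly as in the proof of Theorem~\ref{thoj}: expand $q_n=\sum_j\beta_{n,j}J_{n-j}^{\alpha,\beta}$ as in \eqref{qqbeta}, expand $x^i$ in the basis $(b_t)$ via \eqref{qnibi}, and evaluate each $\langle Q J_{n-j}^{\alpha,\beta},b_t\rangle$. The cancellation of the singular bulk integral against the $\phi_l$ (resp. $\psi_l$) parts of $U_l$ (resp. $V_l$) goes through verbatim as in Lemma~\ref{lemwenok}, since the extra factor $Q$ simply rides along through \eqref{relfi}. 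What remains is governed by the divisibility hypothesis: writing $Q=Q(-1)+(1+x)^{\max G+1}\tilde R$ (possible precisely because $(1+x)^{\max G}$ divides $Q'$) splits the $-1$ boundary integral $\int_{-1}^1 QJ_{n-j}^{\alpha,\beta}\mu_{\alpha,\beta-s'-1}\,dx$, with $s'\le g_{l+1}\le\max G$, into a constant multiple $Q(-1)$ of the $k=0$ value of Lemma~\ref{lemwenok} (Remark~\ref{remk0}) plus an honest Jacobi integral $\int J_{n-j}^{\alpha,\beta}\big[\tilde R(1+x)^{\max G-s'}\big]\mu_{\alpha,\beta}\,dx$ of degree $\le s-1$. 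The $Q(-1)$-part reassembles into $Q(-1)$ times the $\mathcal R$-contribution of $\langle q_n,x^i\rangle$, which vanishes term by term through \eqref{vanisz} (here one uses that $\rho_{n,j}^s$ depends only on whether $s\le m_1$); symmetrically, $Q=Q(1)+(1-x)^{\max H+1}\hat R$ handles the $+1$ boundary and the $\mathcal S$-contribution. Every remaining piece, including the purely bulk terms from $b_t$ with $t>m$, is a Jacobi integral $\int J_{n-j}^{\alpha,\beta}P\,\mu_{\alpha,\beta}\,dx$ with $\deg P<n-j$ for $i\le n-s-1$, and hence vanishes by Jacobi orthogonality. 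Keeping track of these degree inequalities uniformly in $j\le m$ is the delicate bookkeeping, and it is exactly where $\max G$ and $\max H$ (the largest degrees among the $\mathcal R_g$ and $\mathcal S_h$) enter.

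Finally I would pin down the two extreme coefficients. Comparing leading coefficients in $Q q_n=\sum_k\gamma_{n,k}q_k$ gives $\gamma_{n,n+s}=\mathrm{lc}(Q)\,\mathrm{lc}(q_n)/\mathrm{lc}(q_{n+s})\neq0$. For the bottom coefficient, once the lower ones are known to vanish the same triangular relation gives $\gamma_{n,n-s}=\langle Q q_n,x^{n-s}\rangle/\langle q_{n-s},x^{n-s}\rangle$, so it suffices to show the threshold pairing $\langle Q q_n,x^{n-s}\rangle$ is nonzero. This is the one value of $i$ at which the degree inequality in $(\dagger)$ becomes an equality: the bulk term with $j=m$ produces an integral $\int_{-1}^1 J_{n-m}^{\alpha,\beta}(x)P(x)\mu_{\alpha,\beta}(x)\,dx$ with $\deg P=n-m$, whose value is a nonzero multiple of $\mathrm{lc}(Q)\,\beta_{n,m}$, and $\beta_{n,m}\neq0$ by \eqref{betajm}. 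Hence $\gamma_{n,n-s}\neq0$, which confirms that the band is exactly $[-s,s]$.
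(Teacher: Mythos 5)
Your proposal is correct and follows essentially the same route as the paper: the same splitting $Q=Q(\mp 1)+(1\pm x)^{\max\cdot+1}\cdot(\text{polynomial})$ fed through the machinery of Lemma \ref{lemwenok}/Theorem \ref{thoj} to get left-orthogonality of $Qq_n$ to low degrees, the same use of the nonsingular matrix $A$ of Lemma \ref{cdsm} plus a triangular induction to kill the coefficients below $n-s$, the same identification of the extreme coefficients via $\mathrm{lc}(Q)$ and $\beta_{n,m}\neq 0$ from \eqref{betajm}, and the same deferral of small $n$ to the rationality-in-$n$ argument of the Laguerre paper. The only cosmetic difference is that you test against the monomials $x^i$ where the paper tests against the basis $(b_i)_i$, which span the same spaces.
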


\begin{proof}

Let us take $\kappa_l =1$, $l=0,\ldots,m_1-1,$ and  $\tau_l=1$, $l=0,\ldots,m_2-1$ (see (\ref{Uig}) and (\ref{Vig})). We proceed in five steps.

\smallskip

\noindent
\textsl{First step.} For $n\ge m+s$ and $i=1,\ldots , n-s$, we have that $\langle Qq_n,b_i\rangle =0$ and $\langle Qq_n,b_{n-s+1}\rangle \not =0$, where $(b_i)_i$ is the basis defined by (\ref{basisb}). Indeed, from the hypothesis we have that $(Q(x)-Q(1))(1-x)^{-j-1}$, $j=0,\ldots , \max H$, and $(Q(x)-Q(-1))(1+x)^{-j-1}$, $j=0,\ldots , \max G$, are always polynomials. For $i=1,\ldots,m_1,$ we have
\begin{align*}
\langle Qq_n,b_i\rangle&=\langle (Q(x)-Q(-1))q_n,b_i\rangle+Q(-1)\langle q_n,b_i\rangle\\
&=\langle (1+x)^{\max G+1}r(x)q_n,b_i\rangle+Q(-1)\langle q_n,b_i\rangle,
\end{align*}
for some polynomial $r$ of degree $s-\max G-1$ with $r(-1)\neq0$. The result then holds following the same steps as in the proof of Theorem \ref{thoj}. The rest of cases are similar.

\smallskip

\noindent
\textsl{Second step.} For $n\ge m+s$ and $i=0,\ldots , n-s-1$, we have that $\langle Qq_n,q_i\rangle =0$ and $\langle Qq_n,q_{n-s}\rangle \not =0$. Indeed, it follows from the first step, taking into account that $q_i=\sum_{j=1}^{m\vee(1+i)}\xi_{i,j}b_j$, and hence, for $n\ge m+s$ and $i=0,\ldots , n-s-1,$ we have that $1\le j\le n-s$ for $j=1,\ldots m\vee(1+i)$.

\smallskip

\noindent
\textsl{Third step.} The recurrence formula (\ref{frov}) holds for $n\ge m+s$.
Since $\deg q_n=n$, we can always write
\begin{equation}\label{ole}
Q(x)q_n(x)=\sum_{j=-n}^s\gamma_{n,j}q_{n+j},
\end{equation}
with $\gamma_{n,s}\not =0$, $n\ge 0$. Take now $n\ge m+s$ and $i\le m-1$. Using the orthogonality conditions of Theorem \ref{thoj}, we get from (\ref{ole})
\begin{align*}
\langle Q(x)q_n(x),x^i\rangle &=\sum_{j=-n}^s\gamma_{n,j}\langle q_{n+j}(x),x^i\rangle=
\sum_{j=-n}^{m-n-1}\gamma_{n,j}\langle q_{n+j}(x),x^i\rangle\\
&=\sum_{j=0}^{m-1}\gamma_{n,j-n}\langle q_{j}(x),x^i\rangle.
\end{align*}
The second step then gives  $\langle Q(x)q_n(x),x^i\rangle =0$, and therefore
$$
0=\sum_{j=0}^{m-1}\gamma_{n,j-n}\langle q_{j}(x),x^i\rangle,\quad i=0,\ldots, m-1.
$$
For each $n\ge m+s$, this can be seen as a linear system of $m$ homogeneous equations in the $m$ unknowns $\gamma_{n,j-n}$, $j=0,\ldots , m-1$. Lemma \ref{cdsm} gives  that the coefficient matrix $A=(\langle q_{j}(x),x^i\rangle)_{i,j=0}^{m-1}$ is nonsingular, and hence we deduce $\gamma_{n,j-n}=0$, for $j=0,\ldots , m-1,$ and $n\ge m+s$. Using this, the recurrence relation (\ref{ole}) reduces to
\begin{equation*}\label{olex}
Q(x)q_n(x)=\sum_{j=-n+m}^s\gamma_{n,j}q_{n+j}.
\end{equation*}
Using again the second step and the orthogonality conditions of Theorem \ref{thoj} for $i=m$, we get
$$
0=\langle Qq_n,q_m\rangle=\gamma_{n,m-n}\langle q_m,x^m\rangle.
$$
Since $\langle q_m,x^m\rangle\not =0$ (see Theorem \ref{thoj}), we deduce that $\gamma_{n,m-n}=0$. In the same way, we can prove that $\gamma_{n,j}=0$ for $j=-n+m+1,\ldots, -s-1$, and $\gamma_{n,-s}\not =0$. Hence, the polynomials $q_n$, $n\ge m+s$, satisfy the recurrence relation (\ref{frov})
for $n\ge m+s$.

\smallskip

\noindent
\textsl{Fourth step.} Let $P$ be a polynomial of degree $s$ and write
\begin{equation}\label{Pqnrep}
P(x)q_n(x)=\sum_{j=-n}^s\gamma_{n,j}q_{n+j}(x).
\end{equation}
Then for fixed $j\le s$, the recurrence coefficients $\gamma _{n,j}$ are rational functions of $n$ for $n\ge \max\{0,-j\}$. Indeed, since the sequences $\beta_{n,j}$ (\ref{betaj}) are rational functions of $n$ and the recurrence coefficients of the Jacobi polynomials are also rational functions of $n$, the result can be proved in a similar way as in \cite[Lemma 2.5]{ducb}.

\smallskip

\noindent
\textsl{Fifth step.} The recurrence formula (\ref{frov}) holds for $n\ge 0$. Since $Q$ has degree $s$, we always have a representation of $Q(x)q_n(x)$ like in \eqref{Pqnrep}. For a fixed $j\le -s-1$, from the third step, we deduce that $\gamma_{n,j}=0$ for $n\ge m+s$. Since $\gamma_{n,j}$ is a rational function of $n$ for $n\ge s+1$, it follows that $\gamma_{n,j}=0$ for $n\ge s+1$ as well. Then the recurrence formula (\ref{frov}) holds for $n\ge s+1$. For $n=0,\ldots, s,$ the recurrence formula (\ref{frov}) always holds.

\end{proof}

The orthogonality properties proved in Section \ref{jorth} impose some constrains on the recurrence relations satisfied by the polynomials (\ref{iquss}) (the proofs of the following results are similar to that of Theorem 4.2 and Corollaries 4.3 and 4.4 in \cite{ddIlb}, hence we only include here the proof of the
Theorem \ref{jpzt}).

\begin{theorem}\label{jpzt} Let $\alpha,\beta $ be real numbers with $\alpha-\max H\neq 0, -1, -2, \ldots,$ and $\beta-\max G\neq 0,-1, -2, \ldots$, and assume that conditions (\ref{assum}) hold. Let $Q$ be the polynomial $Q(x)=\sum_{k=u}^v\sigma_k(1+x)^k$, with $u\le v$ and $\sigma_u,\sigma_v\not =0$. If there exists $\hat g\in G$ such $\hat g-u\not \in G$ and $\hat g-u\ge 0$ then the polynomials $(q_n)_n$ in \eqref{iquss} do not satisfy a recurrence relation of the form (\ref{qrr}). Analogously,
let $Q$ be the polynomial $Q(x)=\sum_{k=w}^v\tilde \sigma_k(1-x)^k$, with $w\le v$ and $\tilde \sigma_w,\tilde \sigma_v\not =0$. If there exists $\hat h\in H$ such $\hat h-w\not \in H$ and $\hat h-w\ge 0$ then the polynomials $(q_n)_n$ do not satisfy a recurrence relation of the form (\ref{qrr}).
\end{theorem}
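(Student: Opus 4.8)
The plan is to argue by contradiction, playing the left-orthogonality of Theorem \ref{thoj} against the explicit evaluation in Lemma \ref{lemwenok}. Suppose $(q_n)_n$ satisfied a recurrence $Q(x)q_n(x)=\sum_{\ell=s}^r\gamma_{n,\ell}q_{n+\ell}(x)$ of the form (\ref{qrr}) for the first polynomial $Q(x)=\sum_{k=u}^v\sigma_k(1+x)^k$. Fix $\kappa_l=\tau_l=1$, so that Theorem \ref{thoj} applies. First I would pair the recurrence with the basis elements $b_i$, $i=1,\ldots,m_1$: for $n$ large one has $\deg b_i\le m-1<n+s\le n+\ell$ and $n+\ell\ge m$, so every term on the right vanishes by the orthogonality of Theorem \ref{thoj}. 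Hence $\langle Q q_n,b_i\rangle=0$ for all large $n$ and all $i=1,\ldots,m_1$.

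Next I would compute $\langle Q q_n,b_i\rangle$ directly. Writing $q_n$ as in (\ref{qqbeta}) and applying Lemma \ref{lemwenok} to each $\langle(1+x)^kJ_{n-j}^{\alpha,\beta},b_i\rangle$ (valid once $n\ge m+v$), and using that $\rho^i_{n,j}$ is the same for every $i\le m_1$, one obtains
\[
\langle Q q_n,b_i\rangle=c_{n,i}\sum_{l=i-1}^{[m_1\wedge(m_2+i)]-1}\frac{\binom{m_2}{l-i+1}}{(-2)^{l+1}}\,L(P_l),
\]
where $L(P)=\sum_{j=0}^{m}(-1)^j\beta_{n,j}\rho^1_{n,j}P(\theta_{n-j})$ and
\[
P_l(\theta_x)=\sum_{k=u}^v\sigma_k\sum_{s=k}^{g_{l+1}}2^k\nu_s^{g_{l+1}}(\beta-s)_k(s-k+1)_k u_{s-k}^{\alpha}(x)
\]
is a fixed polynomial. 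Since $c_{n,i}\ne0$ for large $n$, the vanishing $\langle Q q_n,b_i\rangle=0$ becomes a triangular system in $i$: for $i=m_1$ only $l=m_1-1$ occurs, and for general $i$ the term $l=i-1$ enters with the nonzero coefficient $(-2)^{-i}$ while the terms $l\ge i$ are, by induction, already known to give $L(P_l)=0$. A downward induction on $i$ therefore yields $L(P_l)=0$ for every $l=0,\ldots,m_1-1$ and all large $n$.

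Finally I would convert $L(P_l)=0$ into a statement about a quasi-Casoratian. Write $\hat g=g_p$. Using the cofactor description (\ref{betaj}) of $\beta_{n,j}$, the quantity $L(P_{p-1})$ equals, up to a nonzero factor, the determinant $W_a^Y(n)$ of (\ref{iqussy}) with $Y_0=P_{p-1}$ and $Y_1,\ldots,Y_m=\mathcal{R}_{g_1},\ldots,\mathcal{R}_{g_{m_1}},\mathcal{S}_{h_1},\ldots,\mathcal{S}_{h_{m_2}}$. The key point is that $\deg P_{p-1}=\hat g-u$ exactly: its leading coefficient is $\sigma_u2^u(\beta-\hat g)_u(\hat g-u+1)_u$, which is nonzero because $\sigma_u\ne0$, because $\hat g-u\ge0$ forces $(\hat g-u+1)_u\ne0$, and because $\beta-\max G\ne0,-1,\ldots$ forces $\beta-g\notin\{0,-1,-2,\ldots\}$ for every $g\in G$, whence $(\beta-\hat g)_u\ne0$. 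Now the hypothesis $\hat g-u\notin G$ says precisely that the degrees $\hat g-u,g_1,\ldots,g_{m_1}$ of the first block of $W_a^Y$ are pairwise distinct, so by the degree formula (\ref{gcd}) (the second block degrees $h_1,\ldots,h_{m_2}$ are automatically distinct) $W_a^Y(x)$ is a polynomial in $x$ of degree exactly $d$, which one checks is non-negative, hence not identically zero. This contradicts $L(P_{p-1})=0$ for all large $n$. The second assertion is proved in the same way, pairing the recurrence with $b_i$, $i=m_1+1,\ldots,m$, using the second identity of Lemma \ref{lemwenok} and the functional $\sum_{j}(-1)^j\beta_{n,j}P(\theta_{n-j})$, and identifying the outcome with the determinant $W_b^Y$ of (\ref{iqussy2}) carrying $\mathcal{S}_{h_1},\ldots,\mathcal{S}_{h_{m_2}}$ together with a degree-$(\hat h-w)$ polynomial in its second block; distinctness of degrees there is exactly the condition $\hat h-w\notin H$.

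The step I expect to be the main obstacle is the middle one: verifying that pairing against the whole family $b_1,\ldots,b_{m_1}$ (rather than a single test function) really isolates each individual functional value $L(P_l)$, which is what the triangular structure in $i$ delivers, and checking that the leading coefficient of $P_{p-1}$ does not accidentally cancel. It is exactly at this last check that both standing hypotheses $\hat g-u\ge0$ and $\beta-\max G\ne0,-1,\ldots$ are consumed, so that the degree formula (\ref{gcd}) can be invoked with the relevant degree equal to $\hat g-u\notin G$.
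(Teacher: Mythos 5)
Your proposal is correct and follows essentially the same route as the paper: assume the recurrence, use the left-orthogonality of Theorem \ref{thoj} to get $\langle Qq_n,b_i\rangle=0$ for large $n$, evaluate via Lemma \ref{lemwenok}, and contradict the exact degree $\hat g-u\notin G$ of the resulting quasi-Casoratian $W_a^Y$ (including the same verification that $\sigma_u$, $(\hat g-u+1)_u$ and $(\beta-\hat g)_u$ are nonzero). The only local difference is how the single functional $L(P_{\hat l})$ is isolated: the paper simply sets $\kappa_{\hat l}=1$ and $\kappa_l=0$ for $l\neq\hat l$ (Theorem \ref{thoj} holds for any choice of the $\kappa_l$), whereas you keep $\kappa_l\equiv1$ and run a downward induction on $i$ through the triangular array $\binom{m_2}{l-i+1}(-2)^{-l-1}$ — both are valid, the paper's choice just makes that step one line.
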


\begin{proof}
We proceed by \textit{reductio ad absurdum}.
Assume that the sequence $(q_n)_n$ satisfies the recurrence relation (\ref{qrr}). Write $0\le \hat l\le m_1-1$ for the index such that $\hat g=g_{\hat l+1}$. Using Theorem \ref{thoj} we get $0=\langle Q(x)q_n,b_{\hat l+1}\rangle$, for $n$ big enough. Using now Lemma \ref{lemwenok} we can write
\begin{align*}
0&=\langle Q(x)q_n,b_{\hat l+1}\rangle=c_{n,\hat l+1}\sum_{j=0}^m\beta _{n,j}(-1)^j\rho_{n,j}^{\hat l+1}\Upsilon(n-j),
\end{align*}
where $\Upsilon$ is the polynomial
$$
\Upsilon(x)=\sum_{l=\hat l}^{[m_1\wedge(m_2+\hat l+1)]-1}\frac{\kappa_l\binom{m_2}{l-\hat l}}{(-2)^{l+1}}\sum_{k=u}^v\sigma_k \sum_{s=k}^{g_{\hat l+1}}2^k\nu_s^{g_{\hat l+1}}(\beta-s)_k(s-k+1)_ku_{s-k}^{\alpha}(x).
$$
Taking $\kappa_{\hat l}=1$ and $\kappa_l=0$ for $l\neq\hat l$, we get that $\deg\Upsilon=2(\hat g-u)$. Since each $u_i^{\alpha}(x)$ is actually a polynomial in $\theta _x$, there exists a polynomial $P$ such that $\Upsilon(x)=P(\theta_x)$.
On the other hand, the degree of $u_i^\alpha (x)$ is $2i$, and since $\sigma_u 2^u\nu_{\hat g}^{\hat g}(\beta-\hat g)_u(\hat g-u+1)_u\not =0$, we conclude that
$P$ is a polynomial of degree $\hat g-u$ satisfying
\begin{align*}
0&=\sum_{j=0}^m\beta _{n,j}(-1)^j\rho_{n,j}^{\hat l+1}P(\theta_{n-j}).
\end{align*}
If we set $Y_0(x)=P(x)$, $Y_l(x)=R_{g_l}(x)$, $l=1,\ldots , m_1,$ and $Y_{m_1+l}(x)=S_{h_l}(x)$, $l=1,\ldots , m_2$, we get from (\ref{iqussy}) that
\begin{align*}
W_a^Y(x)&=\sum_{j=0}^m\beta _{x,j}(-1)^j\rho_{x,j}^{\hat l+1}P(\theta_{x-j})=0,
\end{align*}
which it is a contradiction because since $\deg P=\hat g-u\not\in G$, then $W_a^Y$ is a polynomial of
degree
$$
d=2\left[ \hat g-u+\sum_{g\in G}g+\sum_{h\in H}h-\binom{m_1+1}{2}-\binom{m_2}{2}\right]>0.
$$
The second part is similar but now starting with an index $0\le \hat l\le m_2-1$ such that $\hat h=h_{\hat l+1}$, using that $0=\langle Q(x)q_n,b_{m_1+\hat l+1}\rangle$ for $n$ big enough, using the second part of Lemma \ref{lemwenok}, taking $\tau_{\hat l}=1$ and $\tau_l=0$ for $l\neq\hat l$ and finishing with \eqref{iqussy2}.

\end{proof}

\begin{corollary}\label{jpjc} Let $\alpha,\beta $ be real numbers satisfying $\alpha-\max H\neq 0, -1, -2, \ldots,$ and $\beta-\max G\neq 0,-1, -2, \ldots$, and assume
that conditions (\ref{assum}) hold. Then the polynomials $(q_n)_n$ never satisfy a three-term recurrence relation and hence they are not orthogonal with respect to any measure.
\end{corollary}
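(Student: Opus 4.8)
The plan is to argue by \emph{reductio ad absurdum}, reducing the non-existence of a three-term recurrence to the already-established Theorem~\ref{jpzt}. Suppose then that $(q_n)_n$ satisfies a three-term recurrence relation of the form~(\ref{ttrr}),
\[
xq_n(x)=a_nq_{n+1}(x)+b_nq_n(x)+c_nq_{n-1}(x),
\]
with $a_n\neq0$ (forced by the degrees) and $c_n\neq0$ (forced as soon as the $q_n$ are orthogonal with respect to a quasi-definite functional). This is already a relation of the shape~(\ref{qrr}) with $Q(x)=x$, $s=-1$ and $r=1$, so the temptation is to feed $Q(x)=x$ directly into Theorem~\ref{jpzt}. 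The obstacle is that the expansions $x=-1+(1+x)$ and $x=1-(1-x)$ both have lowest index $u=0$ (resp. $w=0$) in the $(1\pm x)$-bases, and the hypothesis of Theorem~\ref{jpzt} would then require some $\hat g\in G$ with $\hat g-0\notin G$, which is absurd. So $Q(x)=x$ is precisely the case the theorem cannot detect, and resolving this is the one genuinely nontrivial point of the argument.

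The idea that unlocks the proof is to translate $Q$ by a constant so as to raise this lowest index from $0$ to $1$. Adding $q_n(x)$ to both sides of the recurrence gives
\[
(1+x)q_n(x)=a_nq_{n+1}(x)+(b_n+1)q_n(x)+c_nq_{n-1}(x),
\]
which is again a relation of the form~(\ref{qrr}), now with $Q(x)=1+x$, $s=-1$, $r=1$ and nonzero extreme coefficients $\gamma_{n,1}=a_n$ and $\gamma_{n,-1}=c_n$. The crucial feature is that in the $(1+x)$-basis one has $1+x=(1+x)^1$ with \emph{no} constant term, so its lowest index is $u=1$ rather than $0$.

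Next I would invoke the first part of Theorem~\ref{jpzt} with $Q(x)=1+x$ (so $u=v=1$ and $\sigma_1=1\neq0$). Assuming $G\neq\emptyset$, take $\hat g=\min G=g_1$; since every element of $G$ is a positive integer we have $\hat g-u=g_1-1\geq0$, and $g_1-1<\min G$ forces $g_1-1\notin G$. Thus the hypotheses of Theorem~\ref{jpzt} are met, and the theorem asserts that $(q_n)_n$ \emph{cannot} satisfy any recurrence of the form~(\ref{qrr}) with this $Q$, contradicting the displayed relation for $(1+x)q_n$. If instead $G=\emptyset$ (so that $m_1=0$ and, since $m=m_1+m_2\geq1$, necessarily $H\neq\emptyset$), I would run the symmetric argument: form $(1-x)q_n(x)=-a_nq_{n+1}(x)+(1-b_n)q_n(x)-c_nq_{n-1}(x)$, use $Q(x)=1-x=(1-x)^1$ (so $w=1$) and $\hat h=\min H$ in the second part of Theorem~\ref{jpzt}, reaching the same contradiction.

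Hence no three-term recurrence of the form~(\ref{ttrr}) can hold. For the final clause I would appeal to the classical theory of orthogonal polynomials (Favard's theorem and, in particular, its elementary converse): a sequence with $\deg q_n=n$ that is orthogonal with respect to a measure on the real line necessarily obeys a three-term recurrence. Having ruled that out, we conclude that the $(q_n)_n$ are not orthogonal with respect to any measure. I expect no further difficulties once the constant-shift trick is in place, since it is exactly what moves $Q=x$ out of the blind spot of Theorem~\ref{jpzt}.
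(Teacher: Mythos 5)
Your argument is correct and is essentially the paper's intended one (the paper omits the proof, referring to the Laguerre analogue in \cite{ddIlb}): one passes from $xq_n$ to $(1+x)q_n$ by adding $q_n$ to both sides, so that the lowest index in the $(1+x)$-expansion becomes $u=1$, and then applies Theorem~\ref{jpzt} with $\hat g=\min G$ (or the symmetric $(1-x)$ version with $\hat h=\min H$). The only cosmetic caveat is your reliance on quasi-definiteness to get $c_n\neq0$ so that the relation is literally of the form (\ref{qrr}); this is harmless here, since the contradiction in the proof of Theorem~\ref{jpzt} only uses that $(1+x)q_n$ is a finite combination of the $q_{n+j}$, not that the extreme coefficients are nonzero.
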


Using Theorem \ref{jpzt} we can also characterize the algebra of operators $\al_n$ when $G$ and $H$ are segments.

\begin{corollary}\label{coroseg}
Let $\alpha,\beta $ be real numbers satisfying $\alpha-\max H\neq 0, -1, -2, \ldots,$ and $\beta-\max G\neq 0,-1, -2, \ldots$, and assume
that conditions (\ref{assum}) hold. If $G$ and $H$ are segments then
$$
\alp_n=\{Q\in\mathbb{R}[x]: \mbox{$(1+x)^{\max G}(1-x)^{\max H}$ divides $Q'$}\}.
$$
\end{corollary}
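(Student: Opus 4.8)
The plan is to prove the two inclusions separately, after noting that both sides are subalgebras of $\RR[x]$ containing the constants. The inclusion $\supseteq$ is immediate from the material already available: if $(1+x)^{\max G}(1-x)^{\max H}$ divides $Q'$, then Theorem \ref{t6.9} produces the recurrence (\ref{frov}), that is, a difference operator $D_n=\sum_{j=-s}^{s}\gamma_{n,j}\Sh_j\in\A_n$ satisfying $D_n(q_n)=Q(x)q_n$; by the very definition of $\alp_n$ this gives $Q\in\alp_n$.

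For the harder inclusion $\subseteq$, take $Q\in\alp_n$; then $(q_n)_n$ satisfies a recurrence of the form (\ref{qrr}) with this $Q$, and I want to deduce that both $(1+x)^{\max G}$ and $(1-x)^{\max H}$ divide $Q'$. Expanding $Q=\sum_k\sigma_k(1+x)^k$ around $x=-1$, the condition $(1+x)^{\max G}\mid Q'$ is equivalent to $\sigma_k=0$ for $1\le k\le\max G$. The tool is Theorem \ref{jpzt}, whose hypothesis depends only on the lowest index $u$ with $\sigma_u\neq0$. I would first record the combinatorial fact that, because $G$ is the segment $\{\max G-m_1+1,\dots,\max G\}$, a short interval computation shows that the hypothesis of Theorem \ref{jpzt}, namely that some $\hat g\in G$ satisfies $\hat g-u\ge0$ and $\hat g-u\notin G$, holds precisely when $1\le u\le\max G$ (for $u=0$ one has $\hat g-u=\hat g\in G$, and for $u>\max G$ one has $\hat g-u<0$, so both extremes fail).

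The main obstacle is that Theorem \ref{jpzt} constrains only the single lowest power $u$, whereas I need all of $\sigma_1,\dots,\sigma_{\max G}$ to vanish; worse, a nonzero constant term forces $u=0$, where the theorem says nothing. I would resolve this using the linear structure of $\alp_n$: since constants lie in $\alp_n$, the polynomial $\hat Q=Q-Q(-1)$ is again in $\alp_n$, and its lowest $(1+x)$-power is the least $k\ge1$ with $\sigma_k\neq0$. If some $\sigma_k\neq0$ with $1\le k\le\max G$, then this lowest power $u$ satisfies $1\le u\le\max G$, so by the equivalence above the hypothesis of Theorem \ref{jpzt} is met and $\hat Q\notin\alp_n$, a contradiction. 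Hence $\sigma_k=0$ for $1\le k\le\max G$, that is, $(1+x)^{\max G}\mid Q'$. Running the symmetric argument with the $(1-x)$-expansion around $x=1$, the segment $H$, and the second half of Theorem \ref{jpzt} gives $(1-x)^{\max H}\mid Q'$; since $(1+x)^{\max G}$ and $(1-x)^{\max H}$ are coprime, their product divides $Q'$, which completes the inclusion and the proof.
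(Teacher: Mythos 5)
Your proof is correct and takes essentially the approach the paper intends: the paper omits the proof of this corollary (referring to the analogous Laguerre result in \cite{ddIlb}), and the natural argument is exactly yours, namely the inclusion $\supseteq$ from Theorem \ref{t6.9} and the inclusion $\subseteq$ from Theorem \ref{jpzt} applied to $Q-Q(-1)$ and $Q-Q(1)$, using that for a segment $G$ the hypothesis of Theorem \ref{jpzt} holds exactly for $1\le u\le \max G$. The one point worth making explicit is that subtracting a constant preserves the existence of a recurrence of the form (\ref{qrr}) (which is all Theorem \ref{jpzt} needs), so no delicate discussion of the algebra structure of $\alp_n$ is required.
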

If $G$ or $H$ are not segments the algebra $\alp_n$ can have a more complicated structure, as the following example shows.
\begin{example}\label{ex1}
Take $G=\{1,3\}$, $H=\{1\}$, $\alpha=1/2,\beta=1/3$, and
$$
R_1(x)=x+1,\quad R_3(x)=x^3+x^2/3+2x/3+1, \quad \quad S_1(x)=x+1/2.
$$
Using Maple one can see that the polynomials $(q_n)_n$ satisfy recurrence relations of the form (\ref{qrr}) for $Q_0(x)=\frac{x}{135}(135x^3+244x^2-270x-732)$,
and $Q_0'$ is not divisible by $(1+x)^3(1-x)$.
Computational evidence suggests that
$$
\alp_n=\{Q(x)+c_0Q_0(x) :\hbox{$(1+x)^3(1-x)$ divides $Q'$ and $c_0\in\mathbb{R}$}\}.
$$
\end{example}

\medskip 

The case when $\alpha=1,\ldots, \max H,$ or/and $\beta=1,\ldots, \max G,$ is specially interesting because it includes the Krall-Jacobi polynomials orthogonal with respect to the measures (\ref{kjm1}), (\ref{kjm2}) or (\ref{kjm3}). As in the previous section, we only work out here the case $\alpha=m_2,\ldots, \max H,$ and $\beta=m_1,\ldots, \max G$ (the other cases can be studied in a similar way). The orthogonality conditions in Theorem \ref{tho2j} lead us to an improvement of Theorem \ref{t6.9} (the proof is similar to that of Theorem \ref{t6.9} and it is omitted).

\begin{corollary} Let $\alpha,\beta$ be positive integers satisfying $\alpha=m_2,\ldots, \max H,$ and $\beta=m_1,\ldots,\max G,$ and assume that conditions \eqref{assum} hold. Write $\rho_1=\max\{\max G-\beta+1,\beta\}$ and $\rho_2=\max\{\max H-\alpha+1,\alpha\}$.
Let $Q\in\mathbb{R}[x]$ be a polynomial of degree $s$ satisfying that
$(1+x)^{\rho_1}(1-x)^{\rho_2 }$ divides $Q'$. Then the sequence $(q_n)_n$ in \eqref{iquss} satisfies the recurrence relation of the form (\ref{qrr}).
\end{corollary}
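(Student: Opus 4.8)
The plan is to take $\kappa_l=1$, $l=0,\ldots,m_1-1$, and $\tau_l=1$, $l=0,\ldots,m_2-1$, and to run the very same five-step scheme as in the proof of Theorem \ref{t6.9}, but now working throughout with the Sobolev bilinear form $\langle\cdot,\cdot\rangle_S$ of \eqref{innerzx} in place of \eqref{inner}. Concretely, I would replace the three ingredients used there by their integer analogues: the orthogonality of $(q_n)_n$ furnished by Theorem \ref{tho2j} (in place of Theorem \ref{thoj}), the pairing formula of Lemma \ref{lmn1} (in place of Lemma \ref{lemwenok}), and the nonsingularity of the matrix $A=(\langle q_j,x^i\rangle_S)_{i,j=0}^{m-1}$ guaranteed by the second part of Theorem \ref{tho2j} (in place of Lemma \ref{cdsm}). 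Since every step of Theorem \ref{t6.9} is formulated purely in terms of these three facts, the whole argument transfers once the correct divisibility threshold is identified.

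The one genuinely new point, and the reason the exponents drop from $\max G,\max H$ to $\rho_1,\rho_2$, lies in the first step. By Lemma \ref{lmn1}, for $i=1,\ldots,m_1$ the pairing $\langle (1+x)^kJ_{n-j}^{\alpha,\beta},b_i\rangle_S$ carries the inner factor $\sum_{s=k}^{g_{l+1}}2^k\nu_s^{g_{l+1}}(\beta-s)_k(s-k+1)_ku_{s-k}^{\alpha}(n-j)$. Because $\beta$ is now a positive integer, the Pochhammer symbol $(\beta-s)_k$ vanishes for every $s$ with $\beta\le s\le\beta+k-1$; a short check then shows that for $k\ge\rho_1=\max\{\beta,\max G-\beta+1\}$ the two surviving index ranges $s\in[k,\beta-1]$ and $s\in[\beta+k,g_{l+1}]$ are both empty, so that $\langle (1+x)^kJ_{n-j}^{\alpha,\beta},b_i\rangle_S=0$. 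Symmetrically, via $(\alpha-s)_k$ one obtains $\langle (1-x)^kJ_{n-j}^{\alpha,\beta},b_i\rangle_S=0$ for $i=m_1+1,\ldots,m$ and $k\ge\rho_2$. In the non-integer situation of Theorem \ref{t6.9} these pairings vanished only once $k>\max G$ (resp. $k>\max H$), which is precisely the origin of the weaker hypothesis here.

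With this observation the first step goes through: writing $Q(x)-Q(-1)=(1+x)^{\rho_1+1}r(x)$ and $Q(x)-Q(1)=(1-x)^{\rho_2+1}\tilde r(x)$ (possible since $(1+x)^{\rho_1}(1-x)^{\rho_2}$ divides $Q'$), one expands $(Q-Q(\mp1))q_n$ in the families $(1+x)^kJ_{n-j}^{\alpha,\beta}$ (against $b_i$, $i\le m_1$) and $(1-x)^kJ_{n-j}^{\alpha,\beta}$ (against $b_i$, $m_1<i\le m$), where every occurring power strictly exceeds the vanishing threshold, while the leftover terms $Q(\mp1)\langle q_n,b_i\rangle_S$ vanish by the orthogonality of Theorem \ref{tho2j}. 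For $m<i\le n-s$ one instead uses $\langle Qq_n,b_i\rangle_S=\int_{-1}^1 Q q_n\,x^{i-m-1}\,\mu_{\alpha,\beta}\,dx$ together with the ordinary Jacobi orthogonality, noting that the lowest Jacobi degree occurring in $Qq_n$ is $n-m-s>i-m-1$; the same computation gives $\langle Qq_n,b_{n-s+1}\rangle_S\ne0$. Steps two through five are then transcribed from Theorem \ref{t6.9}: the second step passes from the basis $(b_i)_i$ to $(q_i)_i$, the third step uses the nonsingular matrix $A$ of Theorem \ref{tho2j} to annihilate the unwanted coefficients and establish a relation of the form \eqref{qrr} for $n\ge m+s$, and the fourth and fifth steps extend it to all $n$ by the rationality in $n$ of the recurrence coefficients.

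The main obstacle I anticipate is bookkeeping rather than conceptual: one must verify carefully that, for integer $\alpha,\beta$, the two surviving index ranges in Lemma \ref{lmn1} are controlled exactly by $\rho_1$ and $\rho_2$, handling the endpoints $x=-1$ and $x=1$ separately, and one must check that the discrete Sobolev contributions $\langle\cdot,\cdot\rangle_{2S}$ and $\langle\cdot,\cdot\rangle_{3S}$ (already absorbed into Lemma \ref{lmn1}) do not disturb this vanishing. Once the threshold pair $\rho_1,\rho_2$ is pinned down in this way, the remaining steps are routine copies of the non-integer argument.
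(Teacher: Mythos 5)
Your proposal is correct and follows exactly the route the paper intends: the paper omits this proof, stating only that it is similar to Theorem \ref{t6.9} but based on Theorem \ref{tho2j} and Lemma \ref{lmn1}, which is precisely the substitution you carry out. You also correctly supply the one detail the paper leaves implicit, namely that for integer $\beta$ (resp. $\alpha$) the Pochhammer factors $(\beta-s)_k$ (resp. $(\alpha-s)_k$) kill the middle range $\beta\le s\le\beta+k-1$ of the inner sum in Lemma \ref{lmn1}, so the pairing already vanishes for $k\ge\rho_1$ (resp. $k\ge\rho_2$) rather than only for $k>\max G$ (resp. $k>\max H$).
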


Theorem \ref{tho2j} also allows us to prove that the only Jacobi type polynomials which are orthogonal with respect a measure on the real line are the Krall-Jacobi polynomials (the proof is similar to that of Theorem  5.5 in \cite{ddIlb}).

\begin{theorem} Let $\alpha,\beta $ be two positive integers satisfying $\alpha=m_2,\ldots, \max H,$ and $\beta=m_1,\ldots, \max G$, and assume
that conditions \eqref{assum} hold. Then the sequence $(q_n)_n$ only satisfies a three-term recurrence relation as in (\ref{ttrr}) when the polynomials $\R_g(x)$ and $\Ss_h(x)$ have the form (\ref{pel1}) and (\ref{pel2}), respectively.
\end{theorem}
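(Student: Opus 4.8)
The plan is to assume that $(q_n)_n$ satisfies a three-term recurrence relation and to extract from the orthogonality of Theorem \ref{tho2j} that $G$ and $H$ must be the segments $\{\beta,\dots,\beta+m_1-1\}$ and $\{\alpha,\dots,\alpha+m_2-1\}$, and that $\mathcal{R}_g,\mathcal{S}_h$ must be exactly the polynomials (\ref{pel1}),(\ref{pel2}); the converse implication (that these produce the Krall--Jacobi families orthogonal with respect to (\ref{kjm3}), hence satisfying (\ref{ttrr})) is already contained in the discussion following (\ref{pel2}). First I would rewrite the three-term recurrence for $x$ as the two recurrences $(1\pm x)q_n=\sum_{|k|\le 1}\gamma^{\pm}_{n,k}q_{n+k}$. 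Combining this with the left orthogonality $\langle q_N,p\rangle_S=0$ for $\deg p<N$, $N\ge m$ (which follows from Theorem \ref{tho2j}, since $q_0,\dots,q_{N-1}$ span the polynomials of degree $<N$), gives that for every fixed index $i$ and all large $n$,
\begin{equation*}
\langle (1+x)q_n,b_i\rangle_S=0,\qquad \langle (1-x)q_n,b_i\rangle_S=0.
\end{equation*}

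Next, for $i=1,\dots,m_1$ I would expand $q_n$ by (\ref{qqbeta}) and feed these identities into Lemma \ref{lmn1} with $k=1$. Using $(\beta-s)_1=\beta-s$ and $(s)_1=s$, the right-hand side becomes $c_{n,i}$ times a Laplace-type sum involving the degree-lowering operator $\Delta_G$ defined on the basis (\ref{basu}) by $\Delta_G u^\alpha_s=(\beta-s)s\,u^\alpha_{s-1}$. Choosing the free parameters $\kappa_{\hat l}=1$ and $\kappa_l=0$ for $l\ne\hat l$ (legitimate, because the orthogonality of Theorem \ref{tho2j} holds for every choice of $\kappa_l,\tau_l$) isolates a single $\hat g=g_{\hat l+1}$, and the vanishing reads $\sum_j(-1)^j\beta_{x,j}\rho^{\hat l+1}_{x,j}\,(\Delta_G\mathcal{R}_{\hat g})(\theta_{x-j})\equiv 0$. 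By the cofactor expansion (\ref{betaj}) this sum is exactly the determinant $W_a^Y$ of (\ref{iqussy}) with top row $Y_0=\Delta_G\mathcal{R}_{\hat g}$ and the remaining rows $Y_l=\mathcal{R}_{g_l}$, $Y_{m_1+l}=\mathcal{S}_{h_l}$; so the constraint is $W_a^Y\equiv 0$.

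The key structural lemma I would then prove is that, with the lower rows fixed, the kernel of $P\mapsto W_a^Y\ (Y_0=P)$ is exactly $\mathcal{V}_G:=\mathrm{span}\{\mathcal{R}_{g_1},\dots,\mathcal{R}_{g_{m_1}}\}$: by the degree formula (\ref{gcd}) any polynomial whose degree is \emph{not} in $G$ yields a $W_a^Y$ of strictly positive degree (hence nonzero), so reducing $P$ modulo the $\mathcal{R}_{g_l}$ must terminate at $0$. Consequently $\mathcal{V}_G$ is invariant under $\Delta_G$, and symmetrically $\mathcal{V}_H=\mathrm{span}\{\mathcal{S}_{h_l}\}$ is invariant under $\Delta_H u^\alpha_s=(\alpha-s)s\,u^\alpha_{s-1}$. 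Since $\Delta_G$ lowers degree by one and annihilates exactly $u^\alpha_0$ and $u^\alpha_\beta$, applying it to the lowest generator $\mathcal{R}_{g_1}$ (whose image has degree $g_1-1$, smaller than the degree of any nonzero element of $\mathcal{V}_G$) forces $\Delta_G\mathcal{R}_{g_1}=0$, whence $g_1=\beta$ and $\mathcal{R}_\beta=u^\alpha_\beta+\nu^\beta_0$; invariance then forces $g_l-1\in G$ for every $g_l\ne\beta$, i.e. $G=\{\beta,\beta+1,\dots,\beta+m_1-1\}$, and solving the resulting triangular recursion for the coefficients $\nu^{g_k}_s$ reproduces (\ref{pel1}) with free parameters $a_0,\dots,a_{m_1-1}$. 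The same argument with $\Delta_H$ gives $H=\{\alpha,\dots,\alpha+m_2-1\}$ and (\ref{pel2}).

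I expect the main obstacle to be the kernel computation for $W_a^Y$: one must control that a remainder of degree $\notin G$ really produces a nonzero determinant, including the low-degree and boundary cases where the hypotheses of the degree formula (\ref{gcd}) are delicate, and one must carry the Pochhammer bookkeeping needed to match the explicit normalizations in (\ref{pel1})--(\ref{pel2}). By contrast, once the $\Delta_G$- and $\Delta_H$-invariance of $\mathcal{V}_G$ and $\mathcal{V}_H$ is established, the segment structure of $G$ and $H$ falls out cleanly, and the remaining work is the elementary (if tedious) linear-algebra recursion identifying the coefficients.
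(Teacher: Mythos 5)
Your proposal is correct and follows essentially the same route as the paper: both derive $\langle(1\pm x)q_n,b_i\rangle_S=0$ from Theorem \ref{tho2j} together with the assumed three-term recurrence, apply Lemma \ref{lmn1} with $k=1$, isolate a single $\hat g$ (resp. $\hat h$) by choosing $\kappa_{\hat l}=1$ and $\kappa_l=0$ otherwise, and recognize the resulting identity as the vanishing of $W_a^Y$, which the degree formula \eqref{gcd} then converts into the segment structure of $G$ and $H$ and the triangular recursion yielding \eqref{pel1}--\eqref{pel2}. Your packaging of the repeated degree argument into a single kernel/invariance lemma for the lowering operator $\Delta_G$ is a tidier presentation of what the paper carries out step by step for $g_1,g_2,\dots$, but the underlying argument is the same.
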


\begin{proof}
We proceed by \textit{reductio ad absurdum}.
Assume that the sequence $(q_n)_n$ satisfies the three-term recurrence relation
of the form (\ref{ttrr}).
Theorem \ref{tho2j} gives $0=\langle (1+x)q_n,b_1\rangle$, for $n\ge m+1$. Using Lemma \ref{lmn1} (for $k=i=1$) we have
\begin{align*}
0=&\langle (1+x)q_n,b_1\rangle\\
&=c_{n,1}\sum_{j=0}^m(-1)^j\beta_{n,j}\rho_{n,j}^1\left[\sum_{l=0}^{[m_1\wedge (m_2+1)]-1}\frac{\kappa_l\binom{m_2}{l}}{(-2)^{l+1}}\sum_{s=1}^{g_{l+1}}2\nu_s^{g_{l+1}}(\beta-s)su_{s-1}^{\alpha}(n-j)\right].
\end{align*}
For $\hat g\in G$, write $0\le \hat l\le m_1-1,$ for the index such that $\hat g=g_{\hat l+1}$. Taking $\kappa_{\hat l}=1$ and $\kappa_l=0$ for $l\neq\hat l$, we get
\begin{equation}\label{aet}
0=\sum_{j=0}^m(-1)^j\beta_{n,j}\rho_{n,j}^1r_{\hat g}(n-j),
\end{equation}
where $r_{\hat g}$ is the polynomial
\begin{equation*}\label{aet2}
r_{\hat g}(x)=2\sum_{s=1}^{\hat g}\nu_s^{\hat g}(\beta-s)su_{s-1}^{\alpha}(x).
\end{equation*}
Since $u_i^\alpha(x)$ is a polynomial in $\theta_x$, there exists a polynomial $R_{\hat g}$ such that $R_{\hat g}(\theta_x)=r_{\hat g}(x)$. If we set $Y_0(x)=R_{\hat g}(x)$, $Y_{l}(x)=\R_{g_l}(x)$, $l=1,\ldots , m_1$ and $Y_{m_1+l}(x)=\Ss_{h_l}(x)$, $l=1,\ldots ,m_2$, we get from (\ref{iqussy}) and \eqref{aet} that
\begin{align*}
W_a^Y(x)&=\sum_{j=0}^m\beta _{x,j}(-1)^j\rho_{x,j}^iR_{\hat g}(\theta_{x-j})=0.
\end{align*}
Consider now $\hat g=g_1=\min G$. If $\beta\not =g_1$, the polynomial $R_{\hat g}$ has degree $g_1-1\not \in G$  and so $0=W_a^Y(x)$ is a contradiction because $W_a^Y$ is a polynomial of degree (see (\ref{gcd}))
$
2\left(\hat g-1+\sum_{g\in G}g+\sum_{h\in H}h-\binom{m_1+1}{2}-\binom{m_2}{2}\right)\ge 0.
$
Hence $\beta=g_1$. In a similar way we can deduce that $\nu_l^{\hat g}=0$, $l=1,\ldots g_1-1,$ and therefore $r_{\hat g}=0$. This gives for $\R_{g_1}$ the form
\begin{equation}\label{aet3}
\R_{g_1}(\theta_x)=u_\beta^\alpha(x)+a_0,
\end{equation}
for certain real number $a_0$.

Take now $\hat g=g_2$, the second element of $G$. Since $\beta\not =g_2$, the polynomial $R_{\hat g}$ has degree $g_2-1$ and so $g_2-1\not \in G$, otherwise $0=W_a^Y(x)$ is a contradiction because $W_a^Y$ would be a polynomial of degree $2\left(g_2-1+\sum_{g\in G}g+\sum_{h\in H}h-\binom{m_1+1}{2}-\binom{m_2}{2}\right)> 0$ (see (\ref{gcd})). Hence, we conclude that $g_2-1 \in G$ and so $g_2=\beta+1$. Proceeding as before, we can then conclude that
\begin{equation*}\label{aet4}
\R_{g_2}(\theta_x)=u_{\beta+1}^\alpha(x)-\frac{a_0(\beta+1)}{\beta-1}u_1^\alpha(x)+a_1,
\end{equation*}
for certain real number $a_1$.
We can now proceed in the same way to get \eqref{pel1}.

Similarly, for the polynomials $\Ss(\theta_x)$ in \eqref{pel2}, we start with $0=\langle (1-x)q_n,b_{m_1+1}\rangle$, for $n$ big enough and for $\hat h\in H$ we write $0\le \hat l\le m_2-1,$ for the index such that $\hat h=h_{\hat l+1}$. Taking $\tau_{\hat l}=1$ and $\tau_l=0$ for $l\neq\hat l$ we proceed then in the same way as before but using $\alpha$ instead of $\beta$ and \eqref{iqussy2} instead of \eqref{iqussy}.
\end{proof}

If $\alpha=1,\ldots, \max H,$ or/and $\beta=1,\ldots, \max G$, Theorem \ref{jpzt} is no longer true. Even Corollary \ref{coroseg} also fails in this case, as the following example shows.
\begin{example}\label{ex2}
Take $G=\{1\}$, $H=\{2\}$, $\alpha=2,\beta=1$, and
$$
R_1(x)=x+1,\quad \quad S_2(x)=x^2+2x/3+1/2.
$$
Using Maple one can see that the polynomials $(q_n)_n$ satisfy recurrence relations of the form (\ref{qrr}) for 
\begin{align*}
Q_0(x)&=x(x-2),\quad Q_1(x)=\frac{x^2}{2}(2x-3).
\end{align*}
However $Q_0'(x)=-2(1-x)$ and $Q_1'(x)=-3x(1-x)$ are not divisible by $(1+x)(1-x)^2$.
Computational evidence suggests that
$$
\alp_n=\{Q(x)+c_0Q_0(x)+c_1Q_1(x) :\hbox{$(1+x)(1-x)^2$ divides $Q'$ and $c_0,c_1\in\mathbb{R}$}\}.
$$
\end{example}

\end{document}